\numberwithin{equation}{section}
 \numberwithin{table}{section}
 \numberwithin{figure}{section}
\newtheorem{thm}{Theorem}[section]
\newtheorem{lem}[thm]{Lemma}
\newtheorem{prop}[thm]{Proposition}
\theoremstyle{remark}
\newtheorem{rem}{Remark}[section]
\newcommand{\bddots}{%
  \mathinner{\mkern1mu\raise\p@\vbox{\kern7\p@\hbox{.}}\mkern2mu
    \raise4\p@\hbox{.}\mkern2mu\raise7\p@\hbox{.}\mkern1mu}}
\def\II{\operatorname{I\!I}}
\newcommand{\Lebsp}[1]{L^{#1}}
\newcommand{\Lspace}[2][1]{L^{#1}\!\left(#2\right)}
\newcommand{\Lspacewt}[3][1]{\Lebsp{#1}_{#3}\!\left(#2\right)}
\newcommand{\SspaceQwt}[2]%[\omega]
[\chi]{\widetilde{\mathrm{H}}^{#2}_{#1}\!\left(\mcq\right)}
\newcommand{\Polsp}[1]{\ensuremath{\mathcal{P}_{#1}}}
\newcommand{\Pspace}[2][N]{\ensuremath{\Polsp{#1}(#2)}}%{\ensuremath{\Polsp{#1}\!\left(#2\right)}}
\newcommand{\diff}[1]{\ensuremath{\mathrm{d}#1}}
\newcommand{\gradient}[1]{\ensuremath{\nabla#1}}
\newcommand{\p}[3][]{\ensuremath{\frac{\partial^{#1}#2}{\partial #3^{#1}}}}
\newcommand{\psub}[3][]{\ensuremath{\partial^{#1}_{#3}#2}}
\newcommand{\inprod}[3][]{\ensuremath{\left\langle#2, #3\right\rangle_{#1}}}
\renewcommand{\iint}{\ensuremath{\int\hspace*{-8pt}\int}}
\newcommand{\backmatter}{\setcounter{section}{0}\renewcommand{\thesection}{\Alph{section}}}
\renewcommand{\qed}  {\hfill $\rule{2.3mm}{2.3mm}$}
\def \mct {{\bigtriangleup}}
\def \mcq {{\Box}}
\renewcommand{\inprod}[3][]{\ensuremath{\left(#2, #3\right)_{#1}}}
\date{\today}
\author[M.~Samson, H.~Li \& L.~Wang  ]{Michael Daniel Samson${}^1$, \hspace*{10px}  Huiyuan Li${}^2$\hspace*{10px}and\hspace*{10px}Li-Lian Wang${}^1$ }
\title[ A New TSEM I: Implementation $\&$ Analysis]{A New Triangular Spectral Element Method  I: Implementation and Analysis on a Triangle}
\thanks{%\footnotetext{
${}^1$ Division of Mathematical Sciences, School of Physical and Mathematical Sciences,  Nanyang Technological University, 637371, Singapore. The research of the authors is partially supported by Singapore AcRF Tier 1 Grant RG58/08.
\\
\indent ${}^2$ Institute of Software, Chinese Academy of Sciences, Beijing 100190, China. The work of  this
author was supported by National Natural Science Foundation of
China (NSFC) Grants 10601056 and 10971212.
}
 \keywords{Rectangle-triangle mapping, consistency condition, triangular spectral elements, spectral accuracy} \subjclass{65N35, 65N22,65F05, 35J05}
\begin{document}
\graphicspath{{./figfiles/}}

\baselineskip 14pt
\begin{abstract} This paper serves as our first effort to develop a new triangular spectral element method (TSEM) on unstructured meshes, using the rectangle-triangle mapping proposed in the conference note  \cite{LWLM}.
Here,  we provide some new insights into the originality and distinctive features of the mapping, and show that this transform only induces a logarithmic singularity, which allows us to  devise a fast, stable and accurate numerical algorithm for its removal. Consequently, any triangular element can be treated as efficiently as a quadrilateral element, which affords a great flexibility in handling complex computational domains. Benefited from the fact that the image of the mapping includes the polynomial space as a subset, we are able to obtain optimal $L^2$- and $H^1$-estimates of approximation by the proposed  basis functions  on triangle. The implementation details and
some numerical examples are provided to validate the efficiency and accuracy  of the proposed method. All these will pave the way for  developing  an unstructured TSEM based on, e.g.,  the hybridizable discontinuous Galerkin formulation.
\end{abstract}
\maketitle
%\tableofcontents

\vspace*{-10pt}
\section{Introduction}
\setcounter{equation}{0}

The spectral element method (SEM), originated from Patera \cite{patera1984spectral}, integrates the unparalleled accuracy of spectral methods with the geometric flexibility of finite elements, and also enjoys  a high-level parallel computer architecture.  Nowadays, it has become a pervasive numerical technique for simulating challenging problems in complex domains \cite{DFM02,CHQZ06b}.  While the classical SEM on quadrilateral/hexahedral elements (QSEM) exhibits the advantages of using tensorial basis functions and naturally diagonal mass matrices, the need for high-order methods on unstructured
meshes with robust adaptivity spawns the development of triangular/tetrahedral spectral elements.  In general, research efforts along this line  fall into three trends: (i) nodal TSEM based on high-order polynomial interpolation on special interpolation points \cite{CB,Hesthaven,TWV}; (ii) modal TSEM based on the Koornwinder-Dubiner polynomials \cite{Koornwinder,Dubiner,Kar.S05}; and (iii) approximation by non-polynomial functions \cite{SWL,3DLW10,SXL11}.

The question of how to construct ``good'' interpolation points for stable high-order polynomial interpolation on the triangle is still quite subtle and somehow open.  The strict analogy of the Gauss-Lobatto integration rule on quadrilaterals/hexahedra does not exist on triangles \cite{Helen09}, though a ``relaxed'' rule can be constructed in the sense of \cite{XuYuan2011}. We refer to \cite{PasRapp10} for an up-to-date review and a very dedicated comparative study of various criteria for constructing workable interpolation points on the triangle.  In general, such points have low degree of precision (i.e., exactness for integration of polynomials), and this motivates \cite{Pas.R06} the use of a different set of points for integration, which are mapped from the Gauss points on the reference square via the Duffy's transform \cite{Duffy.82}.  The development of TSEM using  Koornwinder-Dubiner polynomials as modal basis functions, generated by the rectangle-triangle mapping (i.e., the Duffy's transform), can be best attested to by the monograph \cite{Kar.S05} and spectral-element package {\em NekTar} (http://www.nektar.info/). The  analysis of this approach can be found in e.g., \cite{Guo.W05,Schwab,LS10,Ch11}.
 However, the main drawbacks of this approach lie in that the interpolation points are unfavorably clustered near one vertex of the triangle, and there is no corresponding nodal basis, making it complicate to implement.  To overcome the second difficulty, a full tensorial rational approximation on triangles was proposed in \cite{SWL} for elliptic problems, and extended to the Navier-Stokes problem in \cite{SXL11}.  This approach still builds on the collapsed Duffy's transform with clustered grids.

It is important to point out that the Duffy's transform not only leads to undesirable distributions of  interpolation/quadrature points, but also requires modifying the tensorial polynomial basis to meet the underlying consistency  conditions (analogous to ``pole conditions'' in polar/spherical coordinates) induced by the singularity of the transform.  Our mind-set is therefore driven by searching for a method based on a different rectangle-triangle mapping that can lead to favorable  distributions  of interpolation/quadrature points on the triangle without loss of accuracy and efficiency of implementation.  With this in mind, we introduced in the conference note \cite{LWLM} a new mapping that pulls one side (at the middle point) of the triangle to two sides of the rectangle (cf. Figure \ref{mapLGL} (a)), and results in  much more desirable distributions of the mapped LGL points (cf. Figure \ref{mapLGL} (c) vs. (d)). Moreover, this mapping is one-to-one.

 \begin{figure}[!ht]
\subfigure[]{
\begin{minipage}[t]{0.24\textwidth}
\centering
\rotatebox[origin=cc]{-0}{ \includegraphics[width=1\textwidth]{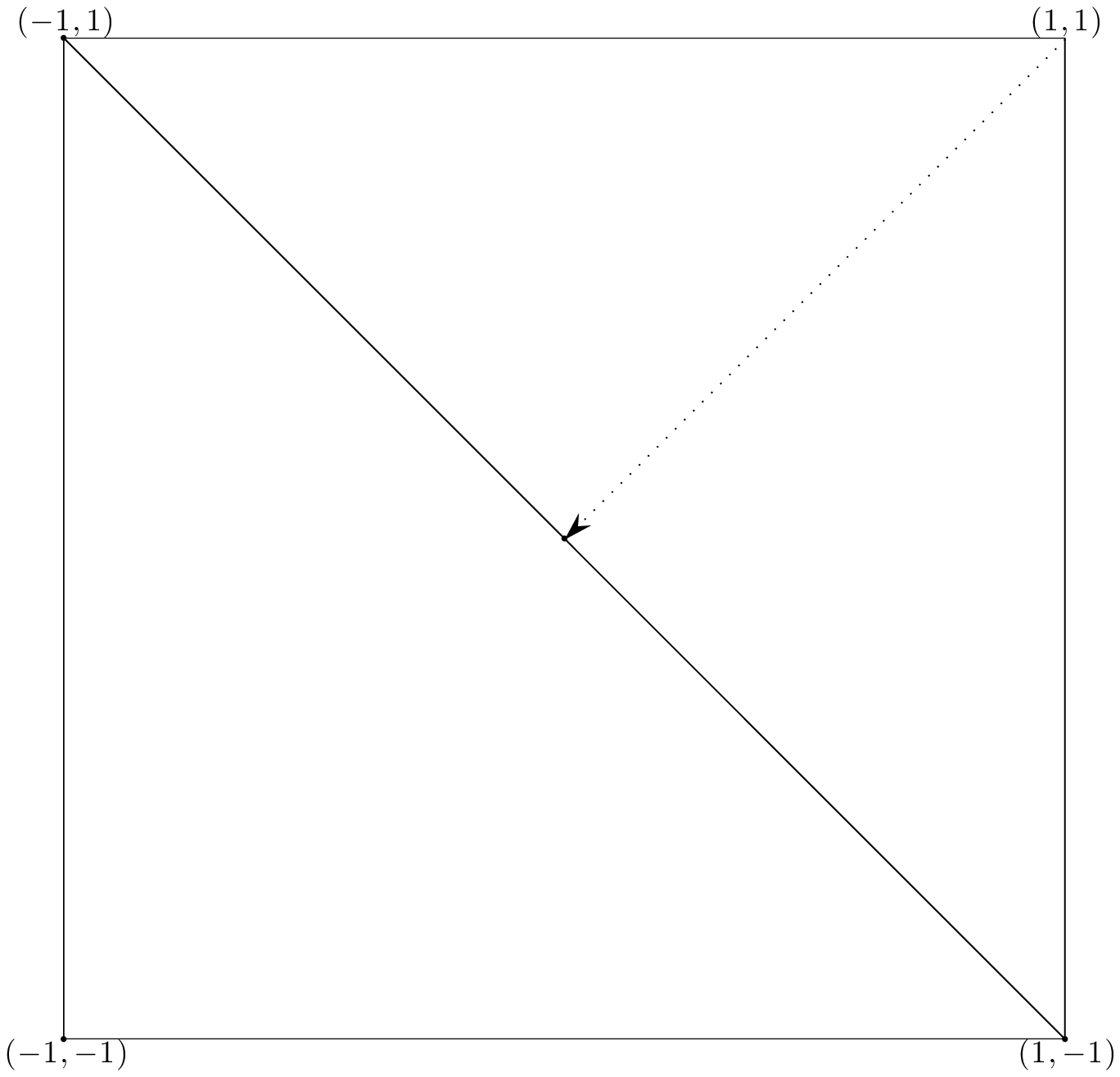}}
\end{minipage}}
\subfigure[]{
\begin{minipage}[t]{0.24\textwidth}
\centering
\rotatebox[origin=cc]{-0}{\includegraphics[width=1\textwidth]{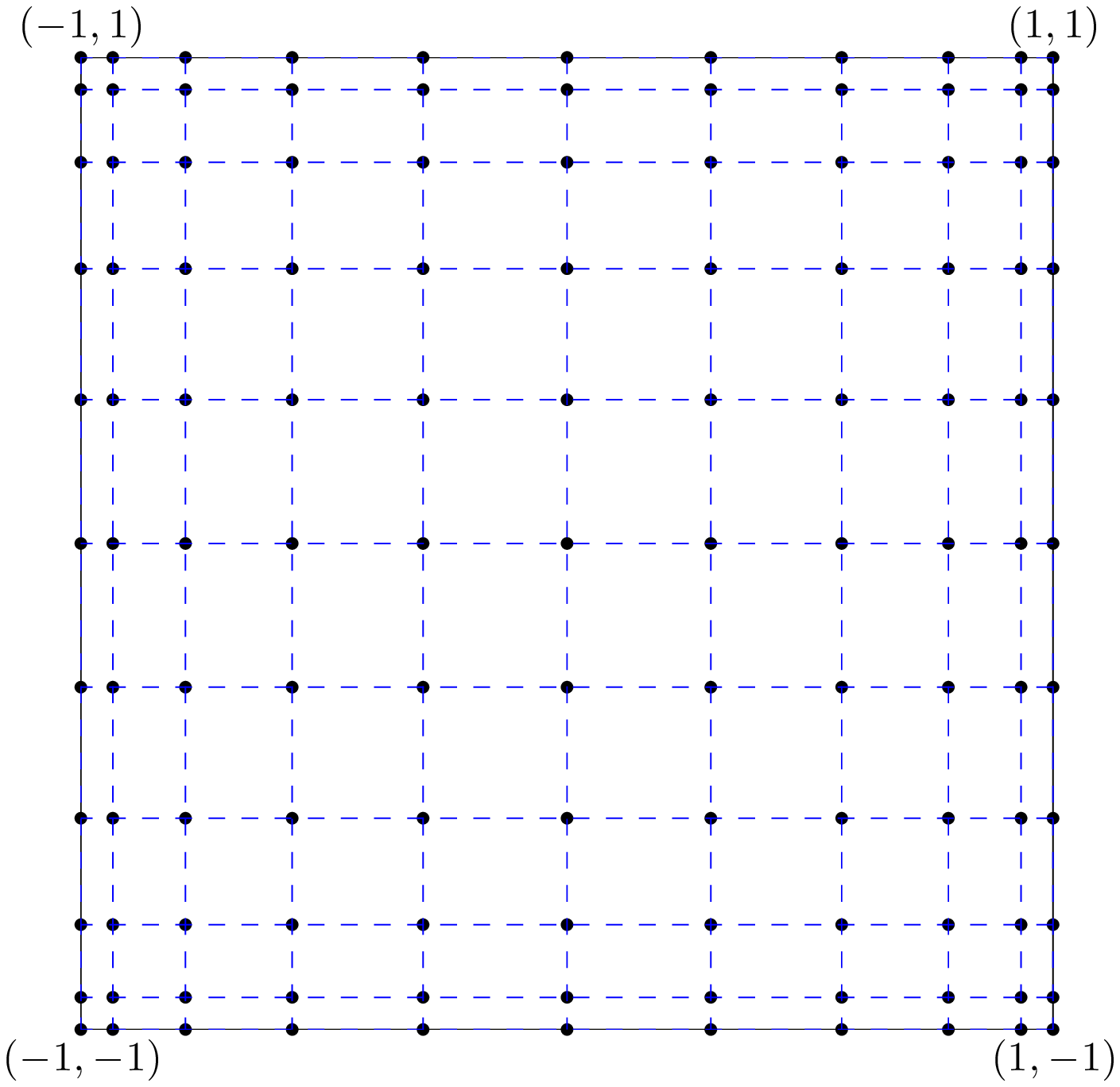}}
\end{minipage}}
\subfigure[]{
\begin{minipage}[t]{0.23\textwidth}
\centering
\rotatebox[origin=cc]{-0}{ \includegraphics[width=1\textwidth]{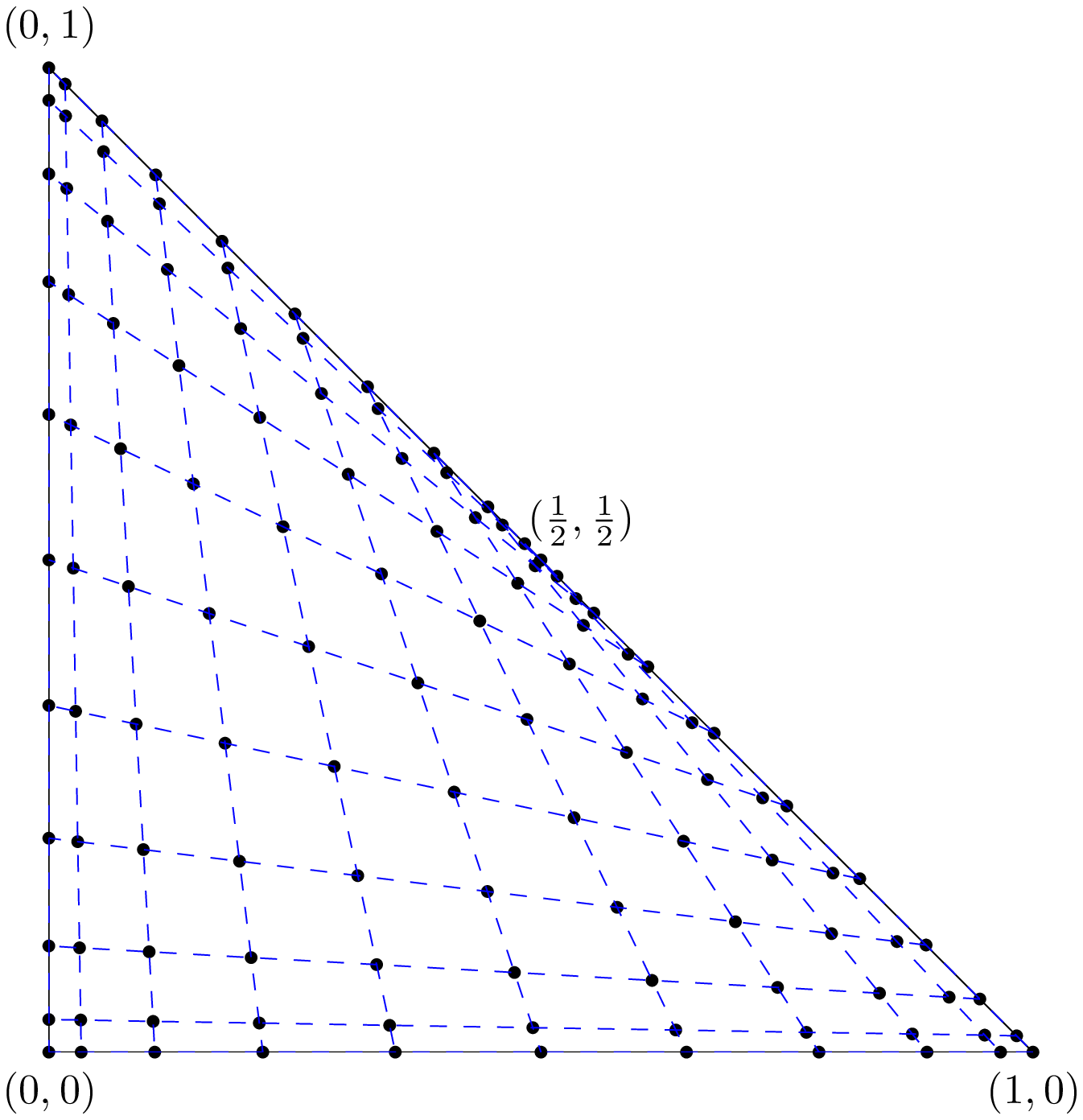}}
\end{minipage}}
\subfigure[]{
\begin{minipage}[t]{0.23\textwidth}
\centering
\rotatebox[origin=cc]{-0}{\includegraphics[width=1\textwidth]{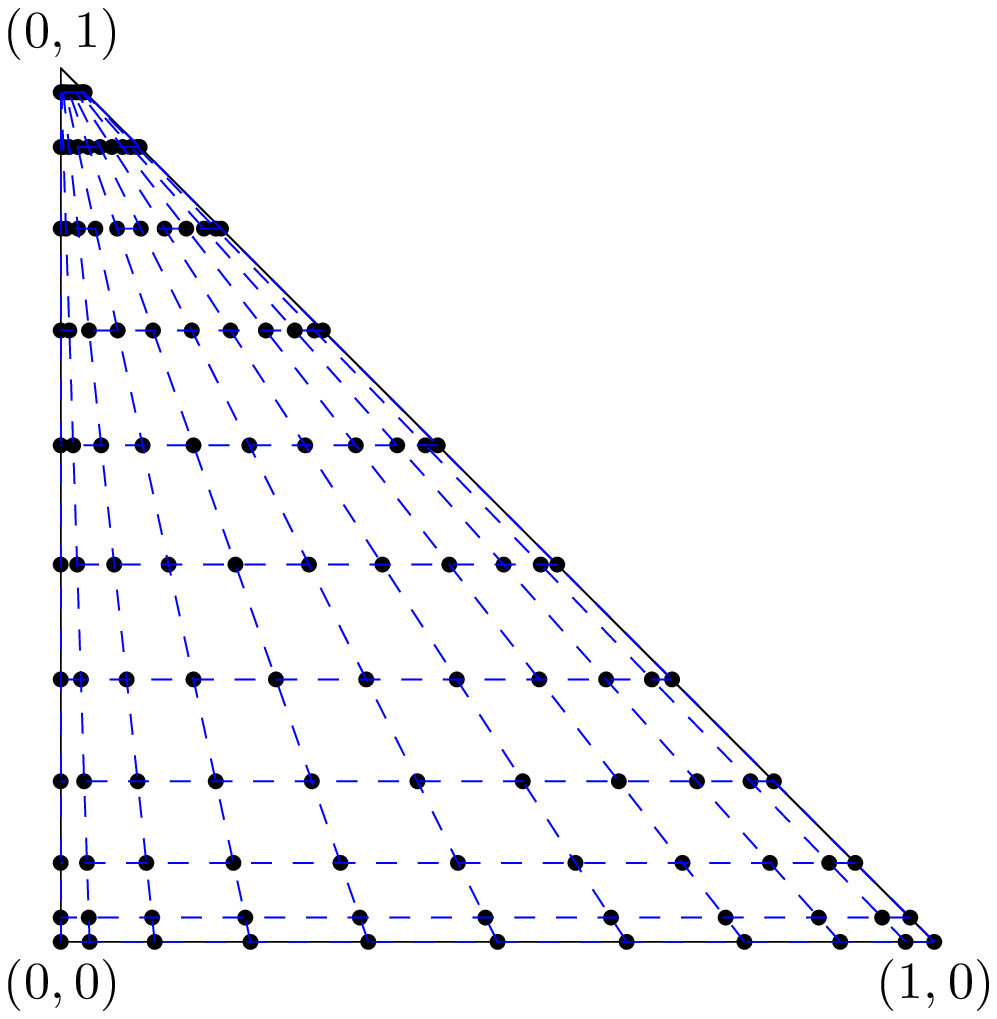}}
\end{minipage}}
\caption{\small(a). $\mct \leftrightarrow \mcq$ mapping; (b). tensorial Legendre-Gauss-Lobatto (LGL) points on $\mcq$; (c). mapped LGL grids on $\mct$; (d). mapped LGL grids on $\mct$ using the Duffy's transform.}\label{mapLGL}
\end{figure}

The purposes of this paper are threefold: (i) have  some new insights into this mapping; (ii)  demonstrate that the singularity of the mapping is of logarithmic type, which can be fully removed; and (iii)  derive optimal error estimates for approximation by the  associated basis functions.   This work will pave the way for  developing a new TSEM on unstructured meshes, which will be explored in the second part. It also brings about an important viewpoint that any triangular element can be mapped to the reference square via a composite of the rectangle-triangle  mapping and an affine mapping, and with the successful removal of the singularity, the triangular element can be  treated as efficiently as a quadrilateral element. One implication is that this allows a mixture of triangular and quadrilateral elements, so one can handle more complex domains with more regular computational meshes, e.g., by tiling the triangular elements along the boundary of the domain. More importantly,  for general unstructured triangular meshes, we can formulate the underlying variational problems using the recently enhanced hybridizable discontinuous Galerkin methods \cite{HDGUnified, CGorHDG, HDGM}. We expect that the QSEM will enjoy a minimal communication between elements, and  a minimum number of  globally coupled degree of freedoms, and  allow for implementing a large degree of nonconformity across elements (e.g., the hanging nodes and mortaring techniques).  We leave this development to the second part after this work.

The rest of this paper is organized as follows.  In Section 2, we present some new insights of the rectangle-triangle mapping.  In Section 3, we introduce the basis functions and the efficient algorithm for computing the stiffness and mass matrices with an emphasis on how to remove the singularity of the rectangle-triangle transform.  We derive  some optimal approximation results in Section 4,  followed by numerical results on a triangle in Section 5.

\section{The rectangle-triangle mapping}

We collect in this section some  properties of  the rectangle-triangle mapping introduced in
\cite{LWLM}, and provide some insightful perspectives on this transform.

\subsection{The rectangle-triangle mapping} Throughout the paper, we denote by
$$
\mct := \big\{(x, y) :\, 0 < x, y, x + y < 1\big\},\quad \mcq := \big\{(\xi, \eta):\, -1 < \xi, \eta < 1\big\},
$$
 the \emph{reference triangle} and  the \emph{reference square}, respectively.  The rectangle-triangle transform (cf. \cite{LWLM}) $T: \mcq\to \mct,$  takes the form
 \begin{equation}\label{best_map}
	x =\frac18 (1 + \xi)(3 - \eta),\quad y =\frac18 (3 - \xi)(1 + \eta), \quad \forall \; (\xi, \eta) \in {\mcq},
\end{equation}
with the inversion $T^{-1}: \mct\to \mcq:$
\begin{equation}\label{best_map_inv}
	\begin{cases}
		 \xi = 1 + (x - y) - \sqrt{(x - y)^2 + 4(1 - x - y)},\\[0.4em]
		\eta = 1 - (x - y) - \sqrt{(x - y)^2 + 4(1 - x - y)},
	\end{cases}
\end{equation}
for any $(x, y) \in {\mct}.$
It maps the vertices
 $(-1,-1), (1,-1)$ and $(-1,1) $  of the square $\mcq$ to
 the vertices $(0,0),\, (1,0) $ and $(0,1)$ of the triangle $\mct$,  respectively,
while the middle point $(1/2,1/2)$  of the hypotenuse  is the image of the vertex $(1,1)$ of $\mcq$.
In other words, this mapping deforms two edges ($\xi=1$ and $\eta=1$) of $\mcq$
into the  hypotenuse  of $\mct$, see Figure \ref{mapLGL} for illustration.

 Under this mapping,  we have
\begin{equation}\label{xy_xieta_inv}
\p{x}{\xi} = \frac{3-\eta}{8}, \quad \p{x}{\eta} = -\frac{1 + \xi}{8},
	\quad \p{y}{\xi} = -\frac{1 + \eta}{8}, \quad \p{y}{\eta} = \frac{3-\xi}{8},
\end{equation}
and the  Jacobian is  given by
\begin{equation}\label{Jacobiannew}
	J = \det\left(\p{(x, y)}{(\xi, \eta)}\right) = \frac{2 - \xi - \eta}{16} = \frac{\sqrt{(x - y)^2 + 4(1 - x - y)}}{8}:=\frac{\chi}{8}.
\end{equation}

For convenience of presentation, we use the handy notation:
\begin{equation}\label{hndnt}
\widetilde \nabla=(\partial_\xi,\partial_\eta),\quad  \widetilde \nabla^\bot=(-\partial_\eta, \partial_\xi), \quad
\widetilde {\nabla}^\intercal = (1 - \xi)\partial_\xi - (1 - \eta)\partial_\eta,
\end{equation}
where we put `` $\widetilde {~}$ "   to distinguish them from  the differential operators in $(x,y).$
Given  $u(x,y)$ on $\mct,$ we define the transformed function: $\tilde{u}(\xi, \eta) = (u\circ T)(\xi, \eta)= u(x, y),$ and likewise for $\tilde v,$ etc..  Then  we have
\begin{equation}\label{massmatrix}
	({u},{v})_{\mct} = \iint_\mct u(x,y) v(x,y) \diff{x}\diff{y} = \iint_\mcq \tilde{u}(\xi,\eta)\tilde{v}(\xi,\eta)J\, \diff{\xi}\diff{\eta}.
\end{equation}
Moreover,  one verifies that
\begin{equation}\label{grad_u_v}
%\begin{split}
	\gradient{u} %&
	= \left(\psub{u}{x}, \psub{u}{y}\right) =
	\chi^{-1}\big(2(\widetilde{\nabla}\cdot\tilde{u}) + (\widetilde{\nabla}^\intercal\tilde{u}),\, 2(\widetilde{\nabla}\cdot\tilde{u}) - (\widetilde{\nabla}^\intercal\tilde{u})\big),
\end{equation}
and
\begin{equation}\label{grad_u_grad v}
\begin{split}
	\inprod[\mct]{\gradient{u}}{\gradient{v}} &= \iint_\mcq\big(\widetilde{\nabla}\cdot\tilde{u}\big)\big(\widetilde{\nabla}\cdot\tilde{v}\big)\chi^{-1}\diff{\xi}\diff{\eta} + \frac{1}{4}\iint_\mcq\big({\widetilde \nabla}^\intercal\tilde{u}\big)\big(\widetilde {\nabla}^\intercal\tilde{v}\big)\chi^{-1}\diff{\xi}\diff{\eta}.
\end{split}
\end{equation}

We observe from \eqref{grad_u_v}-\eqref{grad_u_grad v} that if $\gradient{u}$ is continuous at the middle point $\left({1}/{2}, {1}/{2}\right)$ of the hypotenuse of $\mct$, there automatically holds (note:
$({\widetilde \nabla}^\intercal\tilde{u})|_{(1,1)}=0$):
\begin{equation}\label{ptsingular}
\big(\widetilde\nabla \cdot \tilde u\big)|_{(1,1)}=\left.\left(\p{\tilde{u}}{\xi} + \p{\tilde{u}}{\eta}\right)\right|_{(1, 1)} =0,
\end{equation}
which is referred to as the consistency  condition,  and can be viewed as an analogy of the pole condition in the polar/spherical coordinates. In general, we have to build the condition \eqref{ptsingular} in the approximation space so as to obtain high-order accuracy, which therefore results in the reduction of dimension and modification of the usual basis functions  (cf. \cite{LWLM}).

One important goal of this paper is to demonstrate that this singularity can be removed, thanks to  the
 observation:
\begin{equation}\label{observe}
\iint_{\mcq}\frac 1 {2-\xi-\eta}\, \diff{\xi}\diff{\eta}=4\ln 2,
\end{equation}
which  implies that for any $f\in C(\overline \mcq),$
\begin{equation}\label{observe1}
\Big|\iint_{\mcq}\frac {f(\xi,\eta)} {2-\xi-\eta}\, \diff{\xi}\diff{\eta}\Big|\le 4M \ln 2,
\end{equation}
where $M=\max_{\overline \mcq}|f(\xi,\eta)|.$ In particular,  the coordinate singularity can be eliminated, if
 $f$ is a polynomial on $\mcq$ (see Subsection \ref{mathcomput}).

%\subsection{Some important properties}

Now, we present other important features of this mapping.  Hereafter,
 let $I=(-1,1),$  and for any integer $N\ge 1,$ let ${P}_N(I)$ be the set of all algebraic polynomials  of degree at most $N$.  Denote by
\begin{equation}\label{mmpn}
{\mathcal P}_N(\mct):={\rm span}\big\{x^iy^j : 0\le i+j\le N\big\}, \quad {\mathcal Q}_N(\mcq):=(P_N(I))^2.
\end{equation}
The following property shows the correspondence between two polynomial spaces.
\begin{prop}\label{tranfm} We have
	\begin{itemize}
		\item[(i)\;]  ${\mathcal P}_N(\mct)\circ T\subset  {\mathcal Q}_N(\mcq).$
				\item[(ii)\;]  ${\mathcal Q}_N(\mcq)= \big({\mathcal P}_N(\mct)\circ T\big) \oplus \chi  \big({\mathcal P}_{N-1}(\mct)\circ T\big).$
\end{itemize}
Here, $T$ is the rectangle-triangle transform defined by \eqref{best_map}, and $\chi=(2-\xi-\eta)/2.$
\end{prop}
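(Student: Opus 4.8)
The plan is to dispatch (i) by a degree count and then reduce (ii) to a single nontrivial assertion---the directness of the sum---through a dimension count, leaving the genuine work to an irreducibility argument in $\mathbb{R}[x,y]$.

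First I would prove (i). From \eqref{best_map}, each of $x$ and $y$ has degree at most one in $\xi$ and at most one in $\eta$. Hence for $i+j\le N$ the pullback $(x^iy^j)\circ T$ has degree at most $i+j\le N$ in each of $\xi$ and $\eta$, so it lies in $\mathcal{Q}_N(\mcq)$; summing over monomials gives $\mathcal{P}_N(\mct)\circ T\subset\mathcal{Q}_N(\mcq)$. The same count, together with the fact that $\chi=(2-\xi-\eta)/2$ has degree at most one in each variable, shows $\chi\big(\mathcal{P}_{N-1}(\mct)\circ T\big)\subset\mathcal{Q}_N(\mcq)$ as well.

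Next I would set up the dimension count that turns (ii) into a statement about the intersection. Since $T$ is a bijection, $u\mapsto u\circ T$ is injective, so $\dim\big(\mathcal{P}_N(\mct)\circ T\big)=\dim\mathcal{P}_N(\mct)=(N+1)(N+2)/2$; similarly, multiplication by the nonzero polynomial $\chi$ is injective on $\mathbb{R}[\xi,\eta]$, so the second summand has dimension $N(N+1)/2$. These add to exactly $(N+1)^2=\dim\mathcal{Q}_N(\mcq)$. Therefore, once the two summands are shown to meet only in $\{0\}$, their direct sum is a subspace of $\mathcal{Q}_N(\mcq)$ of full dimension, and (ii) follows from (i).

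The hard part is the directness. Suppose $p\circ T=\chi\,(q\circ T)$ for some $p\in\mathcal{P}_N(\mct)$ and $q\in\mathcal{P}_{N-1}(\mct)$. Composing with $T^{-1}$ and using the identity $\chi=\sqrt{(x-y)^2+4(1-x-y)}$ recorded in \eqref{Jacobiannew} turns this into the pointwise relation $p(x,y)=\sqrt{(x-y)^2+4(1-x-y)}\,q(x,y)$ on $\mct$; squaring and using that polynomials agreeing on an open set are equal yields the polynomial identity $p^2=g\,q^2$, where $g:=(x-y)^2+4(1-x-y)$. I would then show $g$ is irreducible in $\mathbb{R}[x,y]$: viewed as a quadratic in $x$ over $\mathbb{R}[y]$ its discriminant is $32y$, not a square in $\mathbb{R}[y]$, so $g$ admits no factorization into linear factors. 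Comparing the exact power of the prime $g$ on the two sides of $p^2=g\,q^2$---even on the left, odd on the right---forces $p=q=0$, which is the required triviality of the intersection. I expect this irreducibility-and-parity step to be the sole obstacle: it is precisely the algebraic expression of the fact that the Jacobian factor $\chi$ is a genuine square root, irrational over $\mathbb{R}(x,y)$, so that no nonzero polynomial pullback $p\circ T$ can equal $\chi$ times another.
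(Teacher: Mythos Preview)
Your argument is correct, and it takes a genuinely different route from the paper's. Both proofs handle (i) and the inclusion $\chi\big(\mathcal{P}_{N-1}(\mct)\circ T\big)\subset\mathcal{Q}_N(\mcq)$ by the same degree count, but they diverge on the heart of (ii). The paper proves the \emph{spanning} half by an explicit induction: it checks that $\xi,\eta,\xi\eta$ each have the form $a+bx+cy+d\chi$, and then shows that multiplying an element of $\mathcal{P}_k\oplus\chi\mathcal{P}_{k-1}$ by such a linear expression stays in $\mathcal{P}_{k+1}\oplus\chi\mathcal{P}_k$, using $\chi^2=(x-y)^2+4(1-x-y)$. Directness is then implicit from the same dimension identity you wrote down. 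You instead prove the \emph{directness} half first, via irreducibility of $g=(x-y)^2+4(1-x-y)$ and a parity-of-valuation argument in the UFD $\mathbb{R}[x,y]$, and let the dimension count deliver spanning. Your approach is cleaner and more conceptual---it isolates exactly why the decomposition works (namely, $\chi=\sqrt{g}$ is irrational over $\mathbb{R}(x,y)$)---while the paper's induction is more elementary and has the mild advantage of being constructive: it actually tells you how to rewrite any $\xi^i\eta^j$ in the form $p+\chi q$.
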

\begin{proof} We find from \eqref{best_map} that for $0\le i+j\le N,$
\begin{align*}
      x^i y^j =\Big(\frac{1 + \xi} 2\Big)^i \Big(\frac{3 - \eta} 4\Big)^i \Big(\frac{3 - \xi} 4\Big)^j \Big(\frac{1 + \eta} 2\Big)^j\in {\mathcal Q}_N(\mcq).
\end{align*}
This leads to the inclusion in (i).

We see that  for $0\le i+j\le N-1,$
\begin{align*}
      x^i y^j\chi  =\Big(\frac{1 + \xi} 2\Big)^i \Big(\frac{3 - \eta} 4\Big)^i \Big(\frac{3 - \xi} 4\Big)^j \Big(\frac{1 + \eta} 2\Big)^j \frac{2-\xi-\eta} 2
      \in {\mathcal Q}_N(\mcq),
\end{align*}
which implies  $\chi  \big({\mathcal P}_{N-1}(\mct)\circ T\big)\subset {\mathcal Q}_N(\mcq).$

It remains to prove  ${\mathcal Q}_N(\mcq)\subset \big({\mathcal P}_N(\mct)\circ T\big) \oplus \chi  \big({\mathcal P}_{N-1}(\mct)\circ T\big),$ which we will  show by induction. Firstly, by \eqref{best_map_inv}, it is true for $\xi,\eta,$ so is $\xi\eta,$  since $\xi\eta=5 - 4x - 4y - 2\chi.$ Now, assume that it holds for  $\xi^i\eta^j$ with $0 \leq i, j \le N-1$.  Then,  for $0 \leq i, j \le  N$, we find that  $\xi^N\eta^j = \xi(\xi^{N- 1}\eta^j)$, $\xi^i\eta^N = \eta(\xi^i\eta^{N-1})$, and $\xi^N\eta^N = (\xi\eta)(\xi^{N- 1}\eta^{N - 1})$ are all of the form $(a + bx + cy + d\chi)(p(x, y) + q(x, y)\chi)$, where $a, b, c, d$ are constants,  $p \in
\Pspace[N-1]{\mct}$ and  $q \in \Pspace[N-2]{\mct}$. It is apparent that
	\begin{align*}
		 &(a + bx  + cy + d\chi)(p + q\chi)=(a + bx + cy)p + dp\chi + (a + bx + cy)q\chi + dq\chi^2\\
	&\quad \overset{(\ref{best_map_inv})}	= (a + bx + cy)p + d\big((x - y)^2 + 4(1 - x - y)\big)q+\big(dp + (a + bx + cy)q\big)\chi.
	\end{align*}
	Since $(a + bx + cy)p, d\chi^2q \in \Pspace{\mct}$ and $dp, (a + bx + cy)q \in \Pspace[N- 1]{\mct}$, we have
 $$\xi^N\eta^j, \xi^i\eta^N, \xi^N\eta^N \in  \big({\mathcal P}_N(\mct)\circ T\big) \oplus \chi  \big({\mathcal P}_{N-1}(\mct)\circ T\big),$$
 for all $0 \leq i, j \le N$.
 This completes the induction.
\end{proof}
%\begin{rem}\label{DuffyProp} A similar analysis for Duffy's transform. ????
%\end{rem}

In what follows, let  $\omega>0$ be a generic weight function on $\Omega=\mct$ or $\mcq.$
The weighted Sobolev space $H^r_\omega(\Omega)$ with $r\ge 0$ is defined as in Adams \cite{Adam75}, and its norm and semi-norm are  denoted by $\|\cdot\|_{r,\omega,\Omega}$  and $|\cdot|_{r,\omega,\Omega},$ respectively.   In particular,  if $r=0,$ we denote the inner product and norm of  $L^2_\omega (\Omega) $ by $(\cdot,\cdot)_{\omega, \Omega}$ and $\|\cdot\|_{\omega, \Omega},$ respectively.  Moreover, if $\omega\equiv 1,$ we drop it from the notation.

\begin{prop}\label{equivnormlem} For any $u\in H^1(\mct),$ we have
	\begin{equation}\label{EquivNorm2}
\frac{\sqrt 3} 2 \big\|\widetilde \nabla \cdot \tilde u\big\|_{\chi^{-1},\mcq}+\frac {\sqrt 2} {4}
\big\|\widetilde \nabla^{\bot}\cdot\tilde u  \big\|_{\chi,\mcq} \le \big\|\nabla u\big\|_{\mct}\le  \frac{\sqrt 5} 2 \big\|\widetilde \nabla \cdot \tilde u\big\|_{\chi^{-1},\mcq}+\frac 1 2
\big\|\widetilde \nabla^{\bot}\cdot\tilde u  \big\|_{\chi,\mcq},
	\end{equation}
where $\chi=(2-\xi-\eta)/2,$ $\tilde u=u\circ T$ and the differential operators are defined in \eqref{hndnt}.
\end{prop}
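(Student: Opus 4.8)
The plan is to reduce everything to the exact identity already available in \eqref{grad_u_grad v}. Taking $v=u$ there gives
\begin{equation*}
\|\nabla u\|_{\mct}^2 = \big\|\widetilde\nabla\cdot\tilde u\big\|_{\chi^{-1},\mcq}^2 + \tfrac14\big\|\widetilde\nabla^\intercal\tilde u\big\|_{\chi^{-1},\mcq}^2 ,
\end{equation*}
so that, writing $A:=\|\widetilde\nabla\cdot\tilde u\|_{\chi^{-1},\mcq}$ and $B:=\|\widetilde\nabla^\bot\cdot\tilde u\|_{\chi,\mcq}$, the whole proposition becomes a statement about how the single remaining quantity $\|\widetilde\nabla^\intercal\tilde u\|_{\chi^{-1},\mcq}$ compares with $A$ and $B$. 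The bridge between the three operators in \eqref{hndnt} is the pointwise algebraic identity
\begin{equation*}
\widetilde\nabla^\intercal\tilde u = \chi\,\big(\widetilde\nabla^\bot\cdot\tilde u\big) + \tfrac{\eta-\xi}{2}\,\big(\widetilde\nabla\cdot\tilde u\big),
\end{equation*}
which I would obtain simply by writing $1-\xi=\chi+\tfrac{\eta-\xi}{2}$ and $1-\eta=\chi-\tfrac{\eta-\xi}{2}$ and regrouping $\tilde u_\xi,\tilde u_\eta$ into their sum and difference. The only pointwise facts I need on $\mcq$ are $0<\chi<2$ and $|\eta-\xi|/2<1$, i.e. $(\eta-\xi)^2/4\le\chi^2$, both immediate from $\xi,\eta\in(-1,1)$.

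For the upper bound I would substitute this identity into the second term and apply Minkowski's inequality in $L^2_{\chi^{-1}}(\mcq)$:
\begin{equation*}
\big\|\widetilde\nabla^\intercal\tilde u\big\|_{\chi^{-1},\mcq} \le \big\|\chi\,\widetilde\nabla^\bot\cdot\tilde u\big\|_{\chi^{-1},\mcq} + \big\|\tfrac{\eta-\xi}{2}\,\widetilde\nabla\cdot\tilde u\big\|_{\chi^{-1},\mcq} \le B + A ,
\end{equation*}
where the first term equals $B$ exactly because the weight $\chi^{-1}$ absorbs one power of $\chi^2$, and the second is at most $A$ since $(\eta-\xi)^2/4<1$. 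Feeding $\tfrac14(A+B)^2$ back into the identity yields $\|\nabla u\|_{\mct}^2\le\tfrac54A^2+\tfrac12AB+\tfrac14B^2$, and since $\tfrac12\le\tfrac{\sqrt5}{2}$ this is dominated by $\big(\tfrac{\sqrt5}{2}A+\tfrac12B\big)^2$; taking square roots gives the right-hand inequality, with the cross term absorbed into the slack.

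For the lower bound I would instead keep the full quadratic form. Setting $X=\chi^{-1/2}(\widetilde\nabla\cdot\tilde u)$, $Y=\chi^{1/2}(\widetilde\nabla^\bot\cdot\tilde u)$ and $s=\tfrac{\eta-\xi}{2}$, the transplanted integrand of $8\|\nabla u\|_{\mct}^2$ becomes $(8+2s^2)X^2+4sXY+2Y^2$, whose coefficient matrix $\left(\begin{smallmatrix} 8+2s^2 & 2s \\ 2s & 2 \end{smallmatrix}\right)$ has determinant $16$ for every $s$ and smallest eigenvalue $(\sqrt5-1)^2$ at $s^2=1$; since $\iint_{\mcq}X^2=A^2$ and $\iint_{\mcq}Y^2=B^2$, this pointwise bound already delivers an estimate of the type $\|\nabla u\|_{\mct}\gtrsim\sqrt{A^2+B^2}$. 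The hard part will be that the cross term $4sXY$ is indefinite, so a crude split into separate $A$- and $B$-contributions destroys the coupling and wastes too much; the delicate point is to retain exactly enough of the positive $s^2X^2$ term (via a weighted Cauchy--Schwarz or a completion of squares tuned so that the surviving coefficients are sharp) to offset the negative part of the cross term and recover the stated constants $\tfrac{\sqrt3}{2}$ and $\tfrac{\sqrt2}{4}$. I expect essentially all the work to sit in this balancing step, the algebraic reduction and the upper estimate being routine.
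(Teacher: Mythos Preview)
Your reduction and upper bound match the paper exactly: the same identity $\widetilde\nabla^\intercal\tilde u=\chi(\widetilde\nabla^\bot\cdot\tilde u)-\tfrac{\xi-\eta}{2}(\widetilde\nabla\cdot\tilde u)$ is substituted into \eqref{grad_u_grad v} with $u=v$, and a triangle inequality yields $\|\nabla u\|_{\mct}^2\le\frac54A^2+\frac14B^2$, from which the stated upper bound follows.

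For the lower bound you are making it much harder than it is. There is no eigenvalue analysis and no tuning in the paper; it applies a \emph{single} Young inequality to the cross term,
\[
-4(\xi-\eta)\chi\,(\widetilde\nabla^\bot\cdot\tilde u)(\widetilde\nabla\cdot\tilde u)\ \ge\ -2\chi^2|\widetilde\nabla^\bot\cdot\tilde u|^2-2(\xi-\eta)^2|\widetilde\nabla\cdot\tilde u|^2,
\]
which in your variables is nothing but the observation $\tfrac12(Y+2sX)^2\ge0$. This immediately gives the pointwise bound
\[
\big(2\chi(\widetilde\nabla^\bot\cdot\tilde u)-(\xi-\eta)(\widetilde\nabla\cdot\tilde u)\big)^2\ \ge\ 2\chi^2|\widetilde\nabla^\bot\cdot\tilde u|^2-(\xi-\eta)^2|\widetilde\nabla\cdot\tilde u|^2\ \ge\ 2\chi^2|\widetilde\nabla^\bot\cdot\tilde u|^2-4|\widetilde\nabla\cdot\tilde u|^2,
\]
and after integrating against $\chi^{-1}$ and inserting into \eqref{EquivNorm} one obtains $\|\nabla u\|_{\mct}^2\ge\tfrac34A^2+\tfrac18B^2$. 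That is the whole argument. Note that this squared estimate is what the proof actually establishes; the additive form $\tfrac{\sqrt3}{2}A+\tfrac{\sqrt2}{4}B\le\|\nabla u\|_{\mct}$ written in the statement does not literally follow from it (the inequality $\sqrt{a^2+b^2}\le a+b$ goes the wrong way), so do not spend effort trying to recover the sum with those exact constants---the intended content is the equivalence $\|\nabla u\|_{\mct}^2\asymp A^2+B^2$.
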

\begin{proof} By  \eqref{grad_u_grad v}%-\eqref{G1G2}
, we have
$$ \big\|\nabla u\big\|_{\mct}^2= \big\|\widetilde \nabla \cdot \tilde u\big\|^2_{\chi^{-1},\mcq}+\frac 1 {4}
 %\big\|(1-\xi)\partial_\xi \tilde u-(1-\eta) \partial_\eta \tilde u\big\|^2_{\chi^{-1},\mcq}.
  \big\|\widetilde{\nabla}^\intercal\tilde{u}\big\|^2_{\chi^{-1},\mcq}.
%  \norm[\chi^{-1}, \mcq]{\widehat{\nabla}^\intercal\cdot\tilde{u}}^2.
$$
Then using the identity:
\begin{align*}
\widetilde {\nabla}^\intercal\tilde{u}
%\widehat{\nabla}^\intercal\cdot\tilde{u}
= (1-\xi)\partial_\xi \tilde u-(1-\eta) \partial_\eta \tilde u= \frac 1 2 \big(2\chi (\widetilde \nabla^{\bot}\cdot\tilde u)-(\xi-\eta)(\widetilde \nabla \cdot \tilde u)\big),
\end{align*}
we obtain
\begin{equation}\label{EquivNorm}
\big\|\nabla u\big\|_{\mct}^2= \big\|\widetilde \nabla \cdot \tilde u\big\|^2_{\chi^{-1},\mcq}+\frac 1 {16}
 \big\|2\chi (\widetilde \nabla^{\bot}\cdot\tilde u)-(\xi-\eta)(\widetilde \nabla \cdot \tilde u)   \big\|^2_{\chi^{-1},\mcq}.
	\end{equation}
As  $|\xi-\eta|\le 2,$ we get
\begin{align*}
\big\|2\chi (\widetilde \nabla^{\bot}\cdot\tilde u)-(\xi-\eta)(\widetilde \nabla \cdot \tilde u)   \big\|^2_{\chi^{-1},\mcq}\le  4  \big\|\widetilde \nabla^{\bot}\cdot\tilde u  \big\|_{\chi,\mcq}^2+  4   \big\|\widetilde \nabla \cdot \tilde u\big\|_{\chi^{-1},\mcq}^2.
\end{align*}
Thus, the upper bound of \eqref{EquivNorm2} is a consequence of  \eqref{EquivNorm}.

It is clear that
$$
-4(\xi-\eta)\chi  (\widetilde \nabla^{\bot}\cdot\tilde u) (\widetilde \nabla\cdot\tilde u) \ge -\big( 2{\chi^2}   |\widetilde \nabla^{\bot}\cdot\tilde u|^2+ {2(\xi-\eta)^2}  |\widetilde \nabla\cdot\tilde u|^2 \big).
$$
Thus,
\begin{align*}
\big(2\chi (\widetilde \nabla^{\bot}\cdot\tilde u)-(\xi-\eta)(\widetilde \nabla \cdot \tilde u)\big)^2& \ge
2{\chi^2}   |\widetilde \nabla^{\bot}\cdot\tilde u|^2-(\xi-\eta)^2  |\widetilde \nabla\cdot\tilde u|^2\\
& \ge 2{\chi^2}   |\widetilde \nabla^{\bot}\cdot\tilde u|^2-4|\widetilde \nabla\cdot\tilde u|^2,
\end{align*}
which implies
$$
 \big\|2\chi (\widetilde \nabla^{\bot}\cdot\tilde u)-(\xi-\eta)(\widetilde \nabla \cdot \tilde u)   \big\|^2_{\chi^{-1},\mcq}\ge
 2 \big\|\widetilde \nabla^{\bot}\cdot\tilde u  \big\|_{\chi,\mcq}^2- 4 \big\|\widetilde \nabla\cdot\tilde u  \big\|_{\chi^{-1},\mcq}^2.
$$
Therefore, the lower  bound of \eqref{EquivNorm2} follows from \eqref{EquivNorm}.
\end{proof}

\begin{rem}\label{spsmap}   We find from  Proposition \ref{equivnormlem}  that under the rectangle-triangle
 mapping  \eqref{best_map},  the space $H^1(\mct)$ is mapped to the weighted space on $\mcq$:
\begin{equation}\label{sp_sob_h1}
	\widetilde H^1_\chi(\mcq):= \big\{\tilde{u} \in \Lspacewt[2]{\mcq}{\chi} : \widetilde \nabla\cdot {\tilde{u}} \in \Lspacewt[2]{\mcq}{\chi},  \widetilde \nabla ^\bot\cdot {\tilde{u}} \in \Lspacewt[2]{\mcq}{\chi^{-1}}\big\},
\end{equation}
and vice verse. \qed
\end{rem}

\subsection{Some new perspectives and a comparison study}  Next, we have some insights of the rectangle-triangle mapping  and compare it with the Duffy's transform \cite{Duffy.82}.% from various perspectives.

Firstly, the transform \eqref{best_map} is a special case of the general mapping
$T_\theta : \mcq \mapsto \mct:$
\begin{equation}\label{genfundmap}
(x,y) = \left(
		\frac{1 + \xi}{2}   \frac{2- (1 - \theta)(1 + \eta)}{2} ,
		\frac{1 + \eta}{2}  \frac{2- \theta(1 + \xi)}{2} \right),
	\quad \forall (\xi, \eta) \in {\mcq},
\end{equation}
with $\theta=1/2.$  We see that  this mapping pulls the hypotenuse of $\mct$ into two edges of $\mct$
at the point $(\theta, 1-\theta).$   The limiting case with $\theta=0$ reduces to  the Duffy's transform:
$T_D : \mcq \mapsto \mct:$
  \begin{equation}\label{duffy_map}
		x =\frac14 (1 + \xi)(1 - \eta) ,\quad
		y = \frac12 (1 + \eta), \quad \forall\, (\xi,\eta)\in \mcq,
\end{equation}
with the inverse transform: $T_D^{-1} : \mct \mapsto \mcq:$
  \begin{equation*}\label{duffy_map_inv}
		\xi =\frac {2x}{1-y} -1 ,\quad \eta = 2y-1, \quad \forall \, (x, y)\in \mct.
\end{equation*}
It collapses one edge, $\eta = 1$, of  $\mcq$ into the vertex
$(0, 1)$ of  $\mct.$ As the singular vertex corresponds to one edge,  the  Duffy's transform is not a one-to-one mapping,
as opposite to \eqref{best_map}.   This results in  a large portion of mapped LGL points clustered near the singular vertex of $\mct$ (see Figure \ref{mapLGL} (d)).   The Jacobian of \eqref{duffy_map}-\eqref{duffy_map_inv} is
  $J=(1-\eta)/8,$ and we have
      \begin{equation}\label{nablau}
      \nabla u=\Big(\frac 4 {1-\eta} \partial_\xi \tilde u, \frac{2(1+\xi)}{1-\eta}\partial_\xi \tilde u+2\partial_\eta \tilde u\Big).
      \end{equation}
Different from \eqref{ptsingular}, the  corresponding consistency  condition of the Duffy's transform   becomes  $ \partial_\xi \tilde u(\xi, 1)=0.$  In a distinct contrast with  \eqref{observe}, the integral
\begin{equation}\label{transs}
\iint_{\mcq} \frac  1 {1-\eta}\, \diff{\xi}\diff{\eta}=\infty.
\end{equation}
Consequently, the consistency  condition has to be built in the approximation space,  and much care has to taken to deal with this singularity for Duffy's transform-based methods in terms of implementation and analysis.

Secondly,  the nature of the point singularity of  \eqref{best_map} is reminiscent to that of the Gordon-Hall mapping  \cite{GordenHall73}, which
maps the reference square  to the unit disc via
\begin{align*}
x = \frac{\xi}{\sqrt{2}} \sqrt{2-\eta^2},\quad
y = \frac{\eta}{\sqrt{2}} \sqrt{2-\xi^2}, \quad \forall\,(\xi,\eta)\in \mcq,
\end{align*}
and whose  Jacobian is  $(2-\xi^2-\eta^2)/\sqrt{(2-\xi^2)(2-\eta^2)}.$ It is clear that this transform induces singularity at four vertices of the reference square (cf. Figure \ref{Gordon-Hall}).  It is worthwhile to point out that  the collocation scheme on the unit disc using this mapping
 was discussed in  \cite{Heinrichs04}, and this mapping technique was further examined in \cite{Boyd11}.

\begin{figure}[th!]
    \hfill%
	\begin{minipage}{0.24\textwidth}
		\includegraphics[width=1\textwidth]{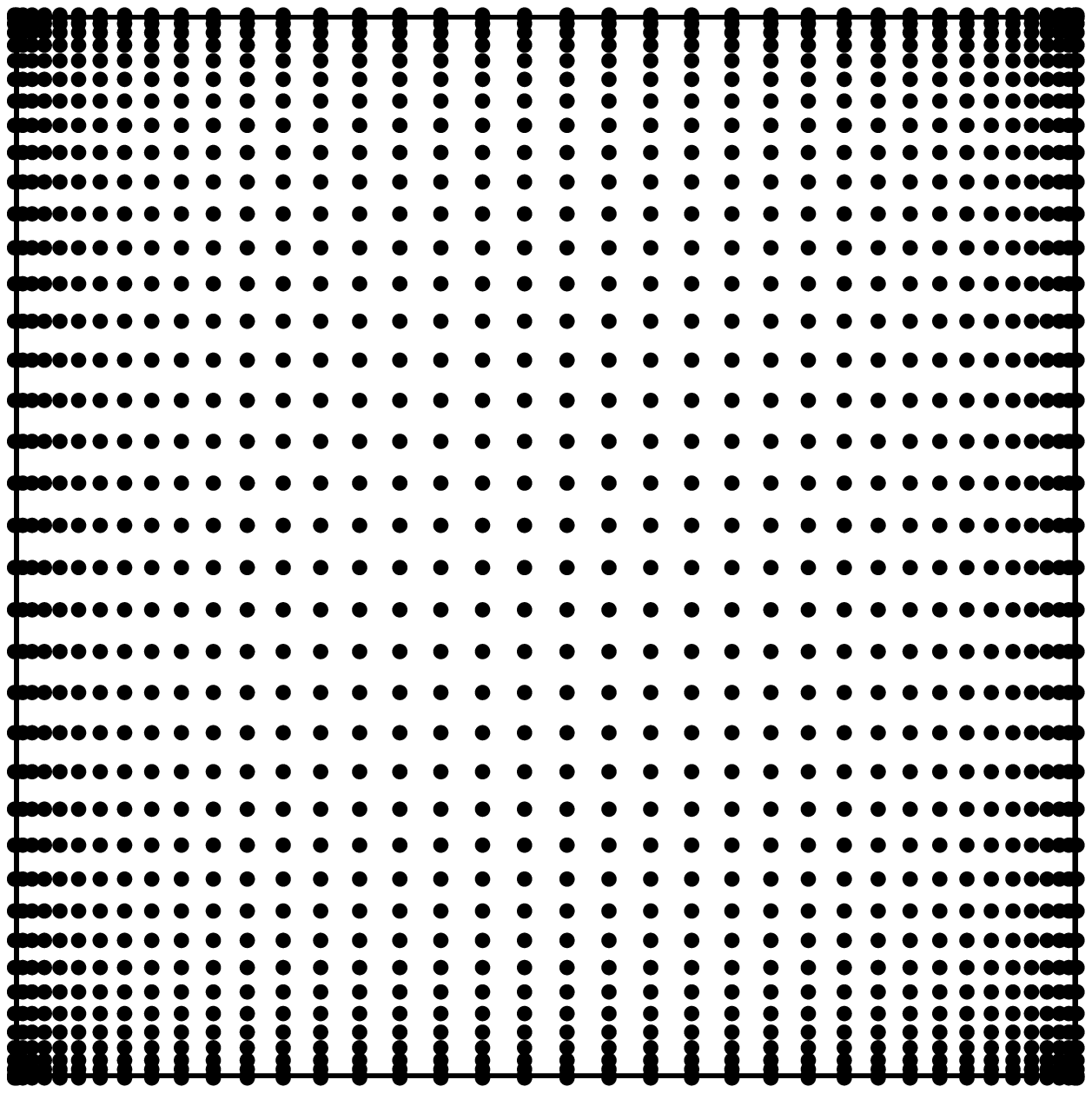}
	\end{minipage}\hfill%
	\begin{minipage}{0.24\textwidth}
		\includegraphics[width=1\textwidth]{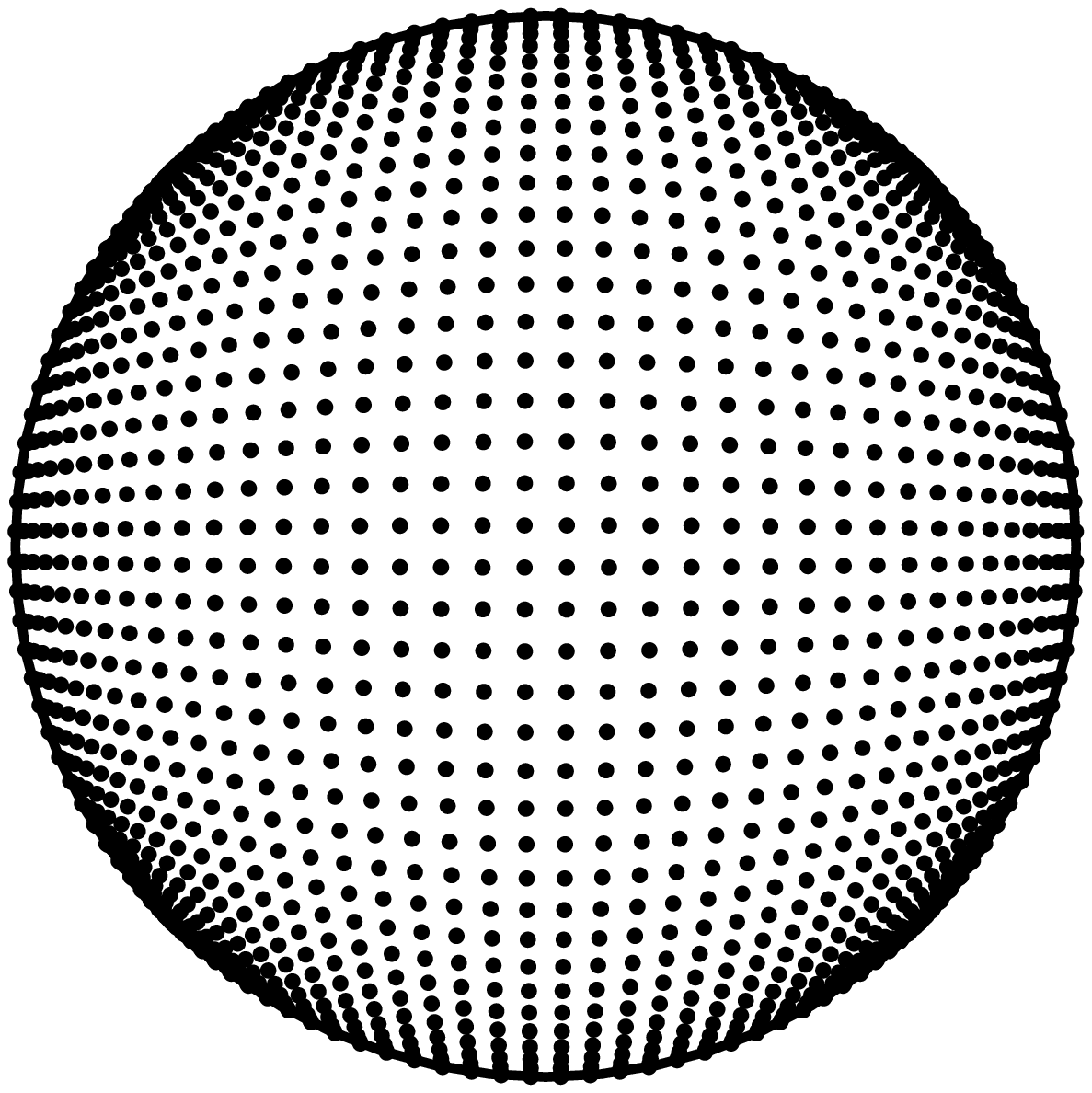}
	\end{minipage}
	\hspace*{\fill}
	\caption{\small Left: tensorial Legendre-Gauss-Lobatto points on the square. Right: the corresponding mapped LGL points on the unit disc.}
	\label{Gordon-Hall}
\end{figure}

In addition, we find  that  the rectangle-triangle transform \eqref{best_map}  can be derived from  the symmetric mapping  on $\mcq:$
\begin{equation}\label{symmap}
	\hat{x} = {\xi} + {\eta}, \quad \hat{y} = {\xi}{\eta}, \quad \forall\, ( \xi, \eta)\in \mcq.
\end{equation}
It transforms any symmetric polynomial in $(\xi, \eta)$ to a polynomial in $(\hat x,\hat y),$ so it is referred to as a symmetric mapping \cite{Weber12}.  One verifies that the image of this mapping is  the curvilinear  triangle
(see Figure  \ref{symmapextfig} (b)):\footnote{It is  worthwhile to note that thanks to the symmetric mapping  $\widehat T : \mcq \mapsto \Omega,$
 Xu \cite{Xu}  discovered the first example of  multivariate Gauss quadrature.}
\begin{align*}
	{\Omega} = \big\{(\hat{x}, \hat{y}) : 1 - \hat{x} + \hat{y}, \, 1 + \hat{x} + \hat{y}, \, \hat{x}^2 - 4\hat{y} > 0 \big\}.
\end{align*}
As the symmetric  mapping \eqref{symmap}, denoted by $\widehat T : \mcq \mapsto \Omega,$ can not distinguish the images of  $(\xi,\eta)$ and $(\eta,\xi),$ it is not one-to-one. To amend this,  one may restrict the domain of $\widehat T$
to the upper triangle, denoted by $\mct_{\rm up},$ in $\mcq$ (see   Figure  \ref{symmapextfig} (a)),  and interestingly,
the square of maximum area  contained in this subdomain is one-to-one mapped to the triangle of maximum area included in the curvilinear triangle $\Omega$, that is,
 \begin{equation}\label{ontot}
 \widehat T :  \widehat \mcq:=(-1,0)\times (0,1) \; \longmapsto\;\; \widehat \mct:=\big\{(\hat{x}, \hat{y}): |\hat{x}| < 1 + \hat{y} < 1\big\},
 \end{equation}
is a bijective mapping (see the shaded parts in Figure  \ref{symmapextfig} (a)-(b)). For clarity of presentation, we denote the
coordinate of any point in  $\widehat \mcq $ by $(\hat \xi,\hat \eta).$    It is clear that the reference square
$\mcq$  and $\widehat \mcq$ are connected by the affine mapping:  $F_1:   \mcq\mapsto \widehat \mcq,$
of the form  (see the shaded parts of Figure  \ref{symmapextfig} (a),  (c)):
\begin{equation}\label{hathat}
\hat \xi=\frac {\xi-1}{2},\quad \hat \eta=\frac{1-\eta} 2,\quad \forall (\xi,\eta)\in \mcq,
\end{equation}
and the affine mapping:   $F_2: \widehat \mct\mapsto \mct,$ takes the form (see the shaded parts of Figure  \ref{symmapextfig} (b), (d)):
\begin{equation}\label{hatmap}
x = \frac{1}{2}(\hat{y}  + \hat{x}+1),\quad  y = \frac{1}{2}(\hat{y} - \hat{x}+1),\quad \forall\, (\hat x,\hat y)\in \widehat \mct.
\end{equation}
In summary,  we  have   $ \mcq \overset{F_1}\longmapsto \widehat \mcq \overset{\widehat T}\longmapsto \widehat  \mct \overset{F_2}\longmapsto  \mct.$ Remarkably, this composite  mapping is identical to the rectangle-triangle mapping
\eqref{best_map}, i.e., $T=F_1\circ \widehat T \circ F_2.$

\begin{figure}[th!]
\begin{minipage}{0.24\textwidth}
	\includegraphics[width=1\textwidth]{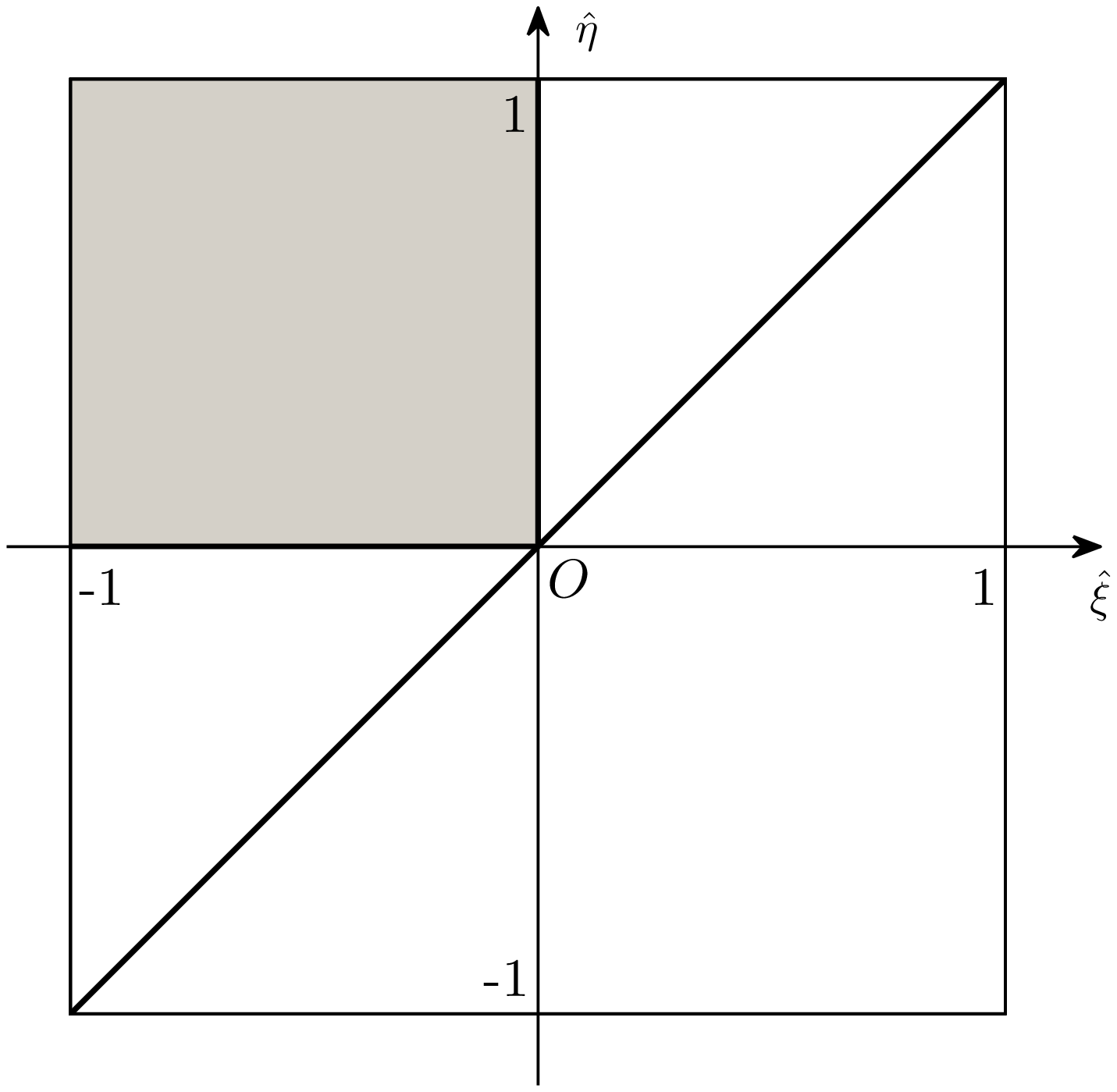}
 \centerline{(a)}
\end{minipage}\ %
\begin{minipage}{0.24\textwidth}
	\includegraphics[width=1\textwidth]{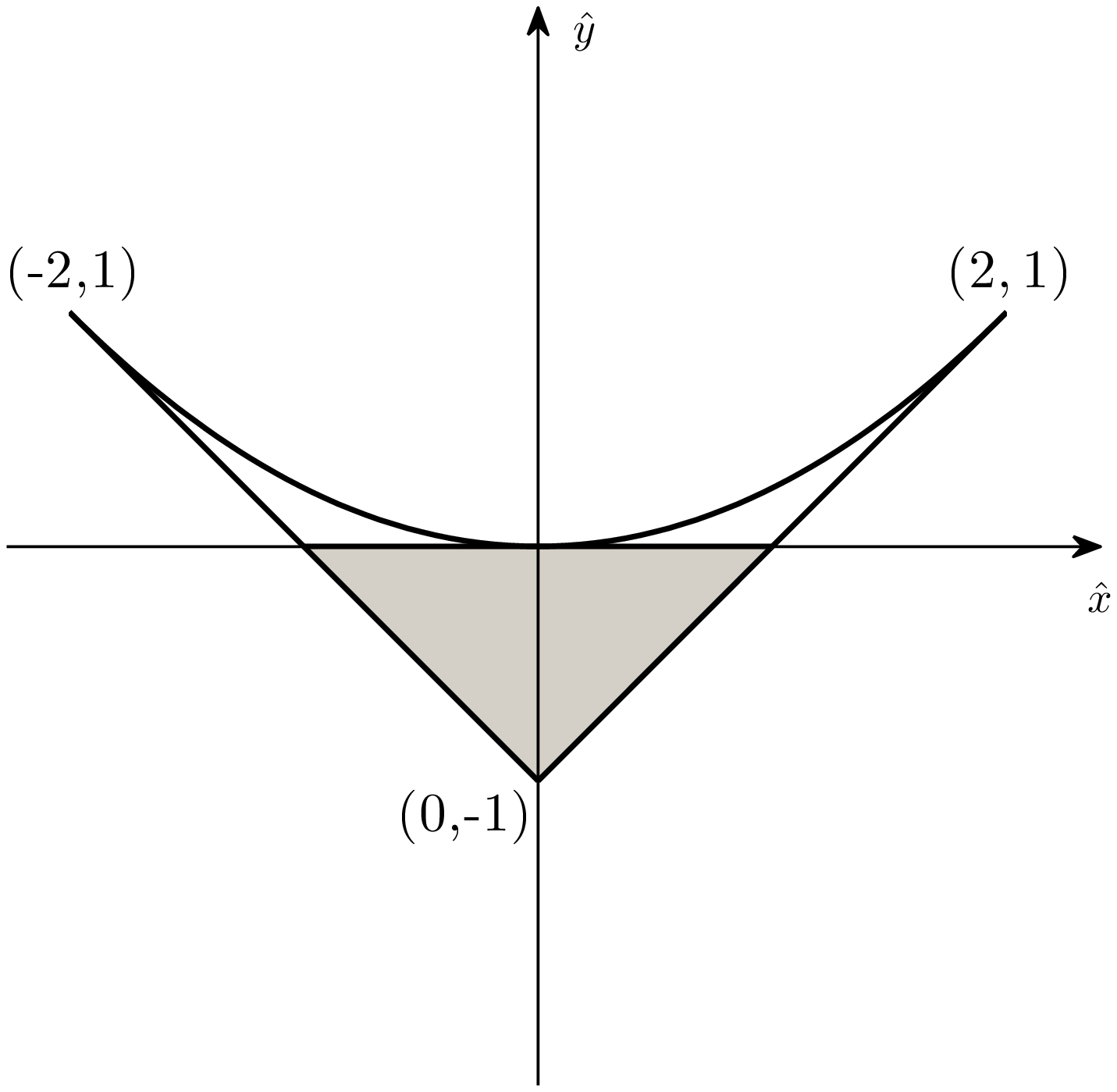}
 \centerline{(b)}
\end{minipage}\ %
\begin{minipage}{0.24\textwidth}
	\includegraphics[width=1\textwidth]{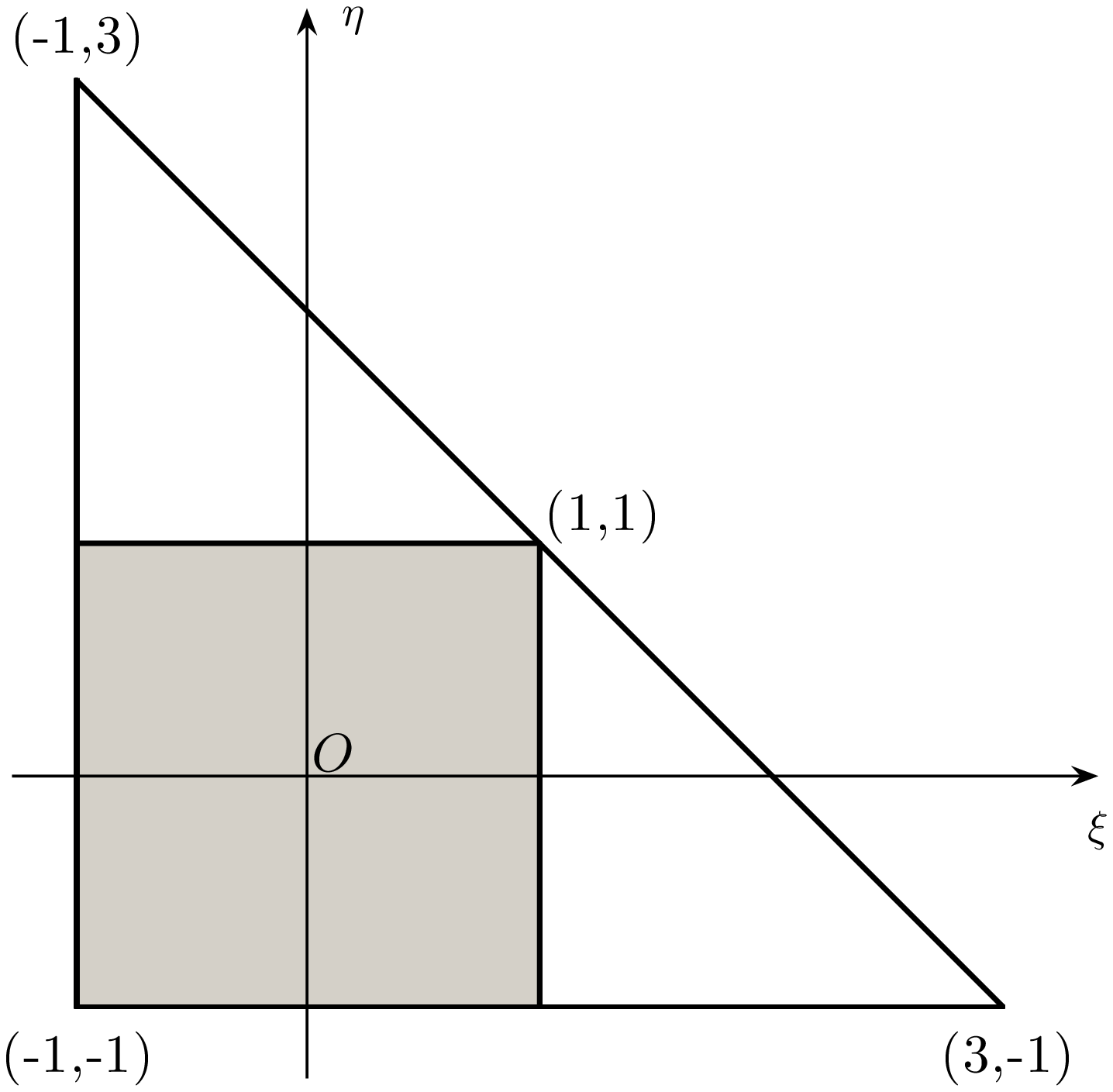}
 \centerline{(c)}
\end{minipage}\ %
\begin{minipage}{0.24\textwidth}
	\includegraphics[width=1\textwidth]{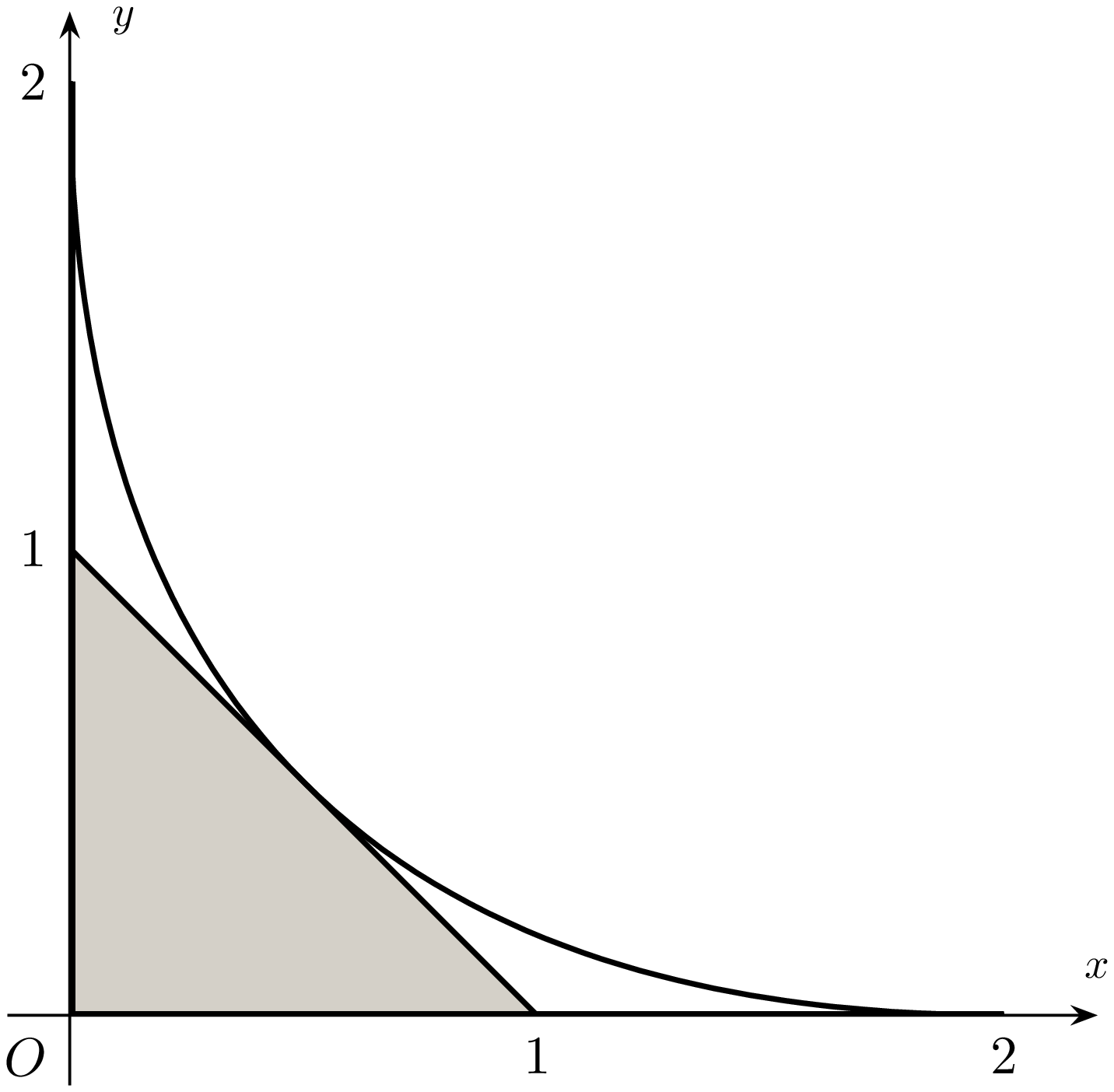}
 \centerline{(d)}
\end{minipage}
\caption{\small (a). The reference square $\mcq,$  the  upper triangle $\mct_{\mathrm{up}} = \left\{(x,y): -1< x< y<1 \right\}$ and the square  $\widehat \mcq$ (shaded).
(b). The image $\Omega$ (resp. $\widehat \mct$ (shaded)) of the symmetric mapping $\widehat T$ whose  domain is $\mct_{\mathrm{up}}$  (resp. $\widehat \mcq$ (shaded)). (c).  Domains obtained from   $\widehat \mcq$ and the upper triangle $\mct_{\mathrm{up}}$ in (a) by the affine mapping $F_1.$
(d).  Domains obtained from   $\widehat \mct$ and $\Omega$ in (b) by the affine mapping $F_2.$ }
\label{symmapextfig}
\end{figure}

\section{Basis functions and computation of the stiffness matrix}\label{sect3}

We introduce in this section the modal and nodal basis functions on triangles, and
present a fast and accurate  algorithm for computing the stiffness  matrix with a focus on how to deal with
the singularity (cf. \eqref{observe}-\eqref{observe1}).

%We reiterate that the triangular element

\subsection{Modal basis}

Let $I=(-1,1)$ as before. We define the space
\begin{equation}\label{Ynsps}
Y_N(\mct)={\mathcal Q}_N(\mcq)\circ T^{-1}=(P_N(I))^2\circ T^{-1},
\end{equation}
which consists of the images of the tensor-product polynomials on $\mcq$ under the inverse mapping $T^{-1}$ defined in  \eqref{best_map_inv}. As a direct consequence of Proposition  \ref{tranfm} (ii), we have
\begin{equation}\label{Ynsps2}
Y_N(\mct)={\mathcal P}_N(\mct) \oplus \chi {\mathcal P}_{N-1}(\mct),
\end{equation}
where  $\chi=\sqrt{(x - y)^2 + 4(1 - x - y)},$  and we recall that ${\mathcal P}_N(\mct)$ is the set of polynomials on
$\mct$ of total degree at most $N.$ This implies that $Y_N(\mct)$ contains not only polynomials, but also special irrational functions: $\chi \phi$ for any $\phi\in {\mathcal P}_{N-1}(\mct).$
%This well characterizes the finite dimensional approximation space used for approximating functions on $\mct.$

%In the current subsection, we would like to  introduce the modal basis functions on $\mct,$ using the following irrational basis functions based on the new mapping \eqref{best_map}:
%\begin{equation}\label{duffybasisrational}
%\begin{split}
%	R_{mn}(x, y)&
%  =J_m^{0,0}(\xi) J^{0,0}_n(\eta)=\tilde R_{mn}(\xi,\eta).
%\end{split}
%\end{equation}
%Then we shall present an efficient and accurate means for evaluating  the associated stiffness and mass matrices.

Define
 \begin{equation}\label{basisc0}
 \begin{split}
&	\phi_0(\zeta)  = \frac{1 - \zeta} 2,\;\; \phi_k(\zeta)  = \frac{1 - \zeta^2} 4 J_{k - 1}^{1, 1}(\zeta),\;\; 1\le k\le N-1,\;\;
 \phi_N(\zeta) = \frac{1 + \zeta} 2,
\end{split}
\end{equation}
where $J_{k}^{1,1}$ is the Jacobi polynomial of degree $k$   (cf. \cite{szeg75}).  It is clear that   $\{\phi_k\}_{k=0}^N$
forms a basis of $P_N(I),$ and we have
\begin{equation}\label{tensorial}
{\mathcal Q}_N(\mcq)={\rm span}\big\{\Phi_{kl} : \Phi_{kl}(\xi,\eta)=\phi_k(\xi)\phi_l(\eta),\; 0\le k,l\le N  \big\}.
\end{equation}
It is a commonly used $C^0$-modal basis for QSEM, which  enjoys a distinct separation of the interior
and boundary modes (including vertex and edge modes). All interior modes are zero
on the triangle boundary. The vertex modes have a unit magnitude at one vertex and
are zero at all other vertices, and the edge modes only have magnitude along one edge
and are zero at all other vertices and edges.

In view of \eqref{Ynsps} and \eqref{tensorial}, we obtain the modal basis for $Y_N(\mct):$
\begin{equation}\label{tensoriassl}
Y_N(\mct)={\rm span}\big\{\Psi_{kl} : \Psi_{kl}(x,y)=\Phi_{kl}\circ T^{-1},\;  0\le k,l\le N  \big\}.
\end{equation}

%We start with a commonly used modal basis for $P_N(I),$  which consists of
%
%
%  It is clear that  $\{\phi_k\}_{k=0}^M$ forms a basis of $\CP_M(I).$ Then the modal basis for  $\Pi_M(\mcq)$  is formed by the tensor product:
%\begin{equation}\label{2dtensor}
%\Phi_{kl}(\xi,\eta)=\phi_k(\xi)\phi_l(\eta), \quad 0\le k,l\le M.
%\end{equation}
%
%Now, we define the modal basis on $\mct$ as
%\begin{equation}\label{2dtensor2}
%\Psi_{kl}(x,y)=\Phi_{kl}(\xi,\eta), \quad 0\le k,l\le M,
%\end{equation}
%where $(x,y)$ and $(\xi,\eta)$ are connected by the mapping \eqref{best_map}-\eqref{best_map_inv}. According, set
%\begin{equation}\label{spsonT}
%\CR_M:={\rm span}\big\{\Psi_{kl}(x,y)~:~ 0\le k,l\le M\big\},
%\end{equation}
%which is  obtained by mapping $\Pi_M(\mcq).$
% \comm{\color{red}\bf Shall we put some figures here to show the edge basis on $\mct$ as in the book of Karniadkas?}

\subsection{Computation of the stiffness matrix}\label{mathcomput} Though the singular integral of \eqref{observe1}-type  has a finite value, some efforts are needed to compute such integrals in a fast and stable manner.
Next, we devise an efficient algorithm for  this purpose.

Let $L_k$ be the Legendre polynomial of degree $k,$  and
recall that (see, e.g.,  \cite{szeg75})
\begin{align}
&(1-\zeta^2)J_{k - 1}^{1, 1}(\zeta)=\frac{2k}{2k + 1}\big(L_{k - 1}(\zeta) - L_{k + 1}(\zeta)\big), \label{Jacorec}\\
& (2k+1)L_k(\zeta)=L_{k+1}'(\zeta)-L_{k-1}'(\zeta), \label{deriv}  \\
& \zeta L_k(\zeta)= \frac{k}{2k+1}L_{k-1}(\zeta)+\frac{k+1}{2k+1} L_{k+1}(\zeta). \label{3term}
\end{align}
Thus, we have
\begin{equation}\label{convssum}
\phi_0'(\zeta)=-\frac 1 2 L_0(\zeta)=-\phi_N'(\zeta), \quad \phi_k'(\zeta)=-\frac k 2 L_k(\zeta),\;\; 1\le k\le N-1.
\end{equation}
By  \eqref{hndnt}, \eqref{grad_u_v}  and \eqref{tensorial}, %  that %\comm{\bf Need to change accordingly!}
\begin{equation}\label{evalumatrix}
\begin{split}
& \chi \partial_x \Psi_{kl} =%(3-\xi) \phi_k'(\xi)\phi_l(\eta)+(1+\eta) \phi_k(\xi)\phi'_l(\eta),
2\big(\phi_k'(\xi)\phi_l(\eta) + \phi_k(\xi)\phi'_l(\eta)\big) +\big [(1-\xi) \phi_k'(\xi)\phi_l(\eta)-(1-\eta) \phi_k(\xi)\phi'_l(\eta)\big],\\
& \chi \partial_y \Psi_{kl} =%(1+\xi) \phi_k'(\xi)\phi_l(\eta)+(3-\eta) \phi_k(\xi)\phi'_l(\eta).
2\big(\phi_k'(\xi)\phi_l(\eta) + \phi_k(\xi)\phi'_l(\eta)\big) -\big [(1-\xi) \phi_k'(\xi)\phi_l(\eta)-(1-\eta) \phi_k(\xi)\phi'_l(\eta)\big ].
\end{split}
\end{equation}
Thanks to \eqref{Jacorec}-\eqref{convssum},   $\chi \partial_x \Psi_{kl}$ and $\chi \partial_y \Psi_{kl}$ can be represented by a  linear combination of
$\{L_{k\pm i}(\xi)L_{l\pm j}(\eta) \}_{i,j=0,1}.$ In view of this, we can  evaluate  the entries of the stiffness matrix by computing  the integrals of the product of  Legendre polynomials:
\begin{equation}\label{stiffness}
s_{kl}^{k'l'}:=\iint_\mct \nabla \Psi_{kl}\cdot \nabla \Psi_{k'l'}\, \diff{x}\diff{y}\;\; \longleftrightarrow
\;\;  \iint_\mcq \frac{L_i (\xi)  L_{j}(\eta) L_{i'}(\xi)L_{j'}(\eta)} {2-\xi-\eta}\,  \diff{\xi}\diff{\eta}:=a_{ij}^{i'j'}.
\end{equation}
%which are all well-defined (cf. \eqref{observe1}).
Using the fact that  the product $L_m L_{n}$ can be represented  by $\{L_p\}_{p=0}^{m+n}$:
\begin{equation}\label{legprod}
L_m(\xi)L_n(\xi)=\sum_{p=0}^{m+n} c_p^{mn} L_p(\xi),
\end{equation}
where the expansion coefficient $\{c_p^{mn}\}$ can be found in  e.g., \cite{Hylleraas},  we obtain
\begin{equation}\label{ajikk}
a_{ij}^{i'j'} = \sum_{p = 0}^{i + i'}\sum_{q = 0}^{j + j'}c^{ii'}_p c^{jj'}_q\hat{a}_{pq}, \;\; {\rm where}\;\;
\hat{a}_{pq} = \iint_\mcq \frac{L_p (\xi)  L_q(\eta)} {2-\xi-\eta}\,  \diff{\xi}\diff{\eta}.
\end{equation}

Now, we describe how to compute $\{\hat a_{pq}\}$  in a fast and accurate manner.
This essentially relies on the following recurrence relation.
\begin{lem}\label{lem:rec} We have % the five-stencil relation:
\begin{equation}\label{firsttworecur}
\frac{\hat a_{p,q+1}-\hat a_{p,q-1}}{2q+1}=\frac{\hat a_{p+1,q}-\hat a_{p-1,q}}{2p+1}, \quad\forall\, p,q \ge 1.%,
\end{equation}
%where $\hat a_{p,-1}=0.$
\end{lem}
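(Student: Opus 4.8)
The plan is to reduce the claimed recurrence to the elementary observation that the kernel $K(\xi,\eta):=1/(2-\xi-\eta)$ satisfies $\partial_\xi K=\partial_\eta K=(2-\xi-\eta)^{-2}$, by means of a pair of integrations by parts in which every boundary contribution vanishes. First I would introduce, for $p\ge 1$, the antiderivative
$$
\Lambda_p(\zeta):=\frac{L_{p+1}(\zeta)-L_{p-1}(\zeta)}{2p+1}.
$$
By \eqref{deriv} we have $\Lambda_p'(\zeta)=L_p(\zeta)$, and since $L_k(\pm1)=(\pm1)^k$ the two differences $L_{p+1}(\pm1)-L_{p-1}(\pm1)$ vanish, giving the crucial boundary property $\Lambda_p(\pm1)=0$ (and likewise $\Lambda_q(\pm1)=0$ for $q\ge1$).

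With $\hat a_{pq}$ as in \eqref{ajikk}, linearity and the definition of $\Lambda$ let me rewrite the two sides of \eqref{firsttworecur} as
$$
\frac{\hat a_{p+1,q}-\hat a_{p-1,q}}{2p+1}=\iint_{\mcq}\Lambda_p(\xi)\,L_q(\eta)\,K\,\diff{\xi}\diff{\eta},\qquad
\frac{\hat a_{p,q+1}-\hat a_{p,q-1}}{2q+1}=\iint_{\mcq}L_p(\xi)\,\Lambda_q(\eta)\,K\,\diff{\xi}\diff{\eta}.
$$
In the first integral I would substitute $L_q=\Lambda_q'$ and integrate by parts in $\eta$; in the second I would substitute $L_p=\Lambda_p'$ and integrate by parts in $\xi$. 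Because $\Lambda_q(\pm1)=0$ (resp.\ $\Lambda_p(\pm1)=0$) the boundary terms drop out, leaving
$$
\frac{\hat a_{p+1,q}-\hat a_{p-1,q}}{2p+1}=-\iint_{\mcq}\Lambda_p(\xi)\,\Lambda_q(\eta)\,\partial_\eta K\,\diff{\xi}\diff{\eta},\qquad
\frac{\hat a_{p,q+1}-\hat a_{p,q-1}}{2q+1}=-\iint_{\mcq}\Lambda_p(\xi)\,\Lambda_q(\eta)\,\partial_\xi K\,\diff{\xi}\diff{\eta}.
$$
Since $\partial_\xi K=\partial_\eta K$, the two right-hand sides are identical, which is exactly \eqref{firsttworecur}.

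The one point requiring care, and what I expect to be the main obstacle, is that $K$ is singular at the corner $(\xi,\eta)=(1,1)$, so both the integrations by parts and the appeal to Fubini must be justified there. This is where the boundary vanishing pays a second dividend: near $(1,1)$ one has $\Lambda_p(\xi)=O(1-\xi)$ and $\Lambda_q(\eta)=O(1-\eta)$, while $2-\xi-\eta=(1-\xi)+(1-\eta)$, so by the arithmetic–geometric mean inequality $(1-\xi)(1-\eta)\le\frac14(2-\xi-\eta)^2$. Hence each integrand $\Lambda_p\Lambda_q\,\partial_\xi K=\Lambda_p\Lambda_q/(2-\xi-\eta)^2$ is in fact bounded up to the corner, and similarly $\Lambda_p L_q/(2-\xi-\eta)$ is bounded because $1-\xi\le 2-\xi-\eta$. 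Together with \eqref{observe1}, which guarantees the $\hat a_{pq}$ are finite in the first place, this makes every integral absolutely convergent with bounded integrand, so that the one-dimensional integration by parts (performed for fixed $\xi<1$, where the integrand in $\eta$ is smooth, and then integrated in $\xi$) and the interchange of integration order are fully legitimate. This completes the argument.
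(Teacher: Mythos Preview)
Your proof is correct and is genuinely different from---and in fact cleaner than---the argument in the paper. The paper reduces the double integral to a one-dimensional one via the Legendre function of the second kind $Q_q$, obtaining (for $p>q$) the representation $\hat a_{pq}=\int_{-1}^1 \ln\frac{3-\xi}{1-\xi}\,L_q(2-\xi)L_p(\xi)\,d\xi$; it then performs a one-dimensional integration by parts and invokes the orthogonality of the Jacobi polynomials $J^{1,1}_{k}$ to kill a residual term. This requires introducing $Q_n$ and $\tilde L_n$, splitting into the cases $p=q$ and $p>q$, and appealing to a separate orthogonality relation. Your route stays in two dimensions and exploits directly the symmetry $\partial_\xi K=\partial_\eta K$ of the kernel together with the antiderivative $\Lambda_p$ (which, by \eqref{Jacorec}, is exactly $-\tfrac{1}{2p}(1-\zeta^2)J^{1,1}_{p-1}(\zeta)$, so your boundary vanishing is the same $(1-\zeta^2)$ factor the paper uses implicitly). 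The AM--GM bound $(1-\xi)(1-\eta)\le\tfrac14(2-\xi-\eta)^2$ neatly handles the corner singularity and makes all integrands bounded, so Fubini and the slice-wise integration by parts are fully justified. Your argument treats all $p,q\ge1$ uniformly, avoids special functions, and isolates the essential mechanism; the paper's proof, on the other hand, yields as a by-product the explicit logarithmic representation \eqref{defncmb}, which is useful elsewhere in the algorithm (e.g.\ for computing the initial values $\hat a_{p0}$).
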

\begin{proof}
The statement is true for $p = q \ge 1,$ since $\hat a_{p, p\pm1}=\hat a_{p \pm1, p}$. In view of the symmetry: $\hat a_{pq}=\hat a_{qp},$
it suffices to show it holds for $p\ge q\ge 1.$

We start with recalling the Legendre functions of the second kind (see, Formula (4.61.4) in \cite{szeg75}):
\begin{align}\label{Qnx1}
 Q_n(x)  = \frac12 \int_{-1}^1   \frac{L_n(t)}{x-t}\, \diff{t}, \quad   n \ge 1; \quad  Q_0(x)=\frac 1 2 \ln \frac{x+1}{x-1}, \quad \forall x>1,
\end{align}
and the important identity (see \cite[Formula (4.62.1)]{szeg75}):
\begin{align}
 Q_n(x)  &= \frac 1 2 \Big(\ln \frac{x+1}{x-1}\Big) L_n(x)-\frac 1 2 \int_{-1}^1 \frac{L_n(x)-L_n(t)}{x-t}\,\diff{t}\nonumber\\
 &=\frac 1 2 \Big(\ln \frac{x+1}{x-1}\Big) L_n(x)-\tilde L_{n-1}(x). \label{Qnx2}
\end{align}
Here,  $\tilde L_n$ is the Legendre polynomial of the second kind, satisfying
\begin{align}\label{Lebseckind}
\tilde L_{n}(x)  = \frac{2n+1}{n+1} x \tilde  L_{n-1}(x) -\frac{n}{n+1} \tilde L_{n-2}(x), \;\; n \ge 1; \quad
\tilde L_{-1}(x) =  0, \;\;  \tilde L_0(x)=1,
\end{align}
which follows from \eqref{Qnx2} and  \cite[Formula (4.62.13)]{szeg75} directly.

%
%We now prove that it holds for $p>q\ge 1$. It is easy to verify that
%\begin{align*}
%%     Q_{n}(\eta)  =&\frac12 \int_{-1}^1   \frac{1}{2-\xi-\eta} \diff{\xi} \times L_n(2-\eta)
%%   - \frac12 \int_{-1}^1   \frac{L_n(2-\eta)-L_n(\xi)}{2-\xi-\eta} \diff{\xi}
%         \int_{-1}^1   \frac{L_q(\eta)}{2-\xi-\eta} \diff{\eta}  =& \int_{-1}^1   \frac{1}{2-\xi-\eta} \diff{\eta} \times L_q(2-\xi)
%   - \int_{-1}^1   \frac{L_q(2-\xi)-L_q(\eta)}{2-\xi-\eta}\, \diff{\eta}
%\\
%    = &%\frac12
%    \ln\frac{3-\xi}{1-\xi}  L_q(2-\xi)  -   2Q_{q-1}(2-\xi),\end{align*}
%where $Q_n$ is the Legendre function of  the second kind (cf. \cite{szeg75}), defined by %, %noting $\deg Q_n = n$.
%$$ Q_{n}(t)  =\frac12 \int_{-1}^1   \frac{L_n(\xi)}{t-\xi} \diff{\xi}, $$
%
%It is readily checked that
%\begin{align*}
%Q_k(\eta) = \sum_{0\le \nu \le k\atop k+\nu \text{ even}} \frac{4\nu+2}{(k-\nu+1)(k+\nu+2)} L_{\nu}(\eta).
%\end{align*}
Using \eqref{Qnx1}-\eqref{Lebseckind} and the orthogonality of the Legendre polynomials, we find that for  $p>q\ge 1$,
\begin{align}
\hat{a}_{pq}=&\int_{-1}^1 \int_{-1}^1 \frac{L_p(\xi) L_q(\eta) }{2-\xi-\eta}\,\diff{\xi}\diff{\eta}
= 2\int_{-1}^1 Q_q(2-\xi) L_p(\xi)\, \diff{\xi}\nonumber \\
= &
\int_{-1}^1\Big[ \Big(\ln\frac{3-\xi}{1-\xi}\Big)  L_q(2-\xi)  -   2\tilde L_{q-1}(2-\xi)\Big] L_p(\xi)\, \diff{\xi}\nonumber  \\
=&\int_{-1}^1 \Big(\ln\frac{3-\xi}{1-\xi}\Big)  L_q(2-\xi)   L_p(\xi)\,\diff{\xi}. \label{defncmb}
\end{align}
Thus, we have from \eqref{deriv} and integration by parts that
\begin{align*}
& \frac{\hat{a}_{p,q+1}-\hat{a}_{p,q-1}}{2q+1}=\int_{-1}^1 \Big(\ln \frac{3-\xi}{1-\xi}\Big) \frac{L_{q+1}(2-\xi)-L_{q-1}(2-\xi)}{2q+1}  L_p(\xi)\,\diff{\xi}\\
&\qquad =\int_{-1}^1 \Big(\ln \frac{3-\xi}{1-\xi}\Big)\frac{L_{q+1}(2-\xi)-L_{q-1}(2-\xi)}{2q+1} \Big[\frac{L_{p+1}(\xi)-L_{p-1}(\xi)}{2p+1}\Big]'\,\diff{\xi}\\
&\qquad =   -\int_{-1}^1  \left[\Big( \ln \frac{3-\xi}{1-\xi}\Big)    \frac{L_{q+1}(2-\xi)-L_{q-1}(2-\xi)}{2q+1}\right]'    \frac{L_{p+1}(\xi)-L_{p-1}(\xi)}{2p+1}\,\diff{\xi}.
\end{align*}
Working out the derivative, we obtain
\begin{align*}
& \frac{\hat{a}_{p,q+1}-\hat{a}_{p,q-1}}{2q+1}\\
&\overset{(\ref{deriv})} =  \int_{-1}^1  \left[ L_{q}(2-\xi)   \ln \Big(\frac{3-\xi}{1-\xi}\Big) -  \frac{L_{q+1}(2-\xi)-L_{q-1}(2-\xi)}{(q+1/2)(3-\xi)(1-\xi)}\right]  \frac{L_{p+1}(\xi)-L_{p-1}(\xi)}{2p+1}\, \diff{\xi}
\\
&\overset{(\ref{defncmb})} = \frac{\hat{a}_{p+1,q}-\hat{a}_{p-1,q}}{2p+1} -\int_{-1}^1  \frac{L_{q+1}(2-\xi)-L_{q-1}(2-\xi)}{(q+1/2)(3-\xi)(1-\xi)}
 \frac{L_{p+1}(\xi)-L_{p-1}(\xi)}{2p+1}\, \diff{\xi}\\
&\overset{(\ref{Jacorec})} = \frac{\hat{a}_{p+1,q}-\hat{a}_{p-1,q}}{2p+1}  +  \frac{1}{2pq} \int_{-1}^1   J^{1,1}_{q-1}(2-\xi) J^{1,1}_{p-1}(\xi) (1-\xi^2)\,\diff{\xi}\\
&~~= \frac{\hat{a}_{p+1,q}-\hat{a}_{p-1,q}}{2p+1},
%\\
\end{align*}
where we used the fact $p>q$ and the orthogonality of  Jacobi polynomials in the last step.
\end{proof}

Equipped with \eqref{firsttworecur}, we are able to compute $\{\hat a_{pq}\}_{p\ge q}$ accurately and rapidly.
We summarize the algorithm as follows.
\vspace{5mm} \hrule  \vspace{2mm} {\bf Algorithm for computing $\{\hat a_{pq}\}_{p,q=0}^N$}
\nolinebreak
\begin{itemize}
\item[1.] Initialization
\begin{itemize}
\item[(a)] For $p=0,1,\cdots, 2N,$ compute $\hat a_{p0}\,;$
\item[(b)] For $p=1,2,\cdots, 2N-1,$ compute $\hat a_{p1}\,.$
\end{itemize}

\item[2.]  For $q = 2, 3, \cdots, N,$\\
	For $p=q,\cdots, 2N - q,$
	\begin{equation}\label{itmeqn}
\hat{a}_{pq} = \hat a_{p,q-2} + \frac{2q-1}{2p+1}({\hat a_{p+1,q-1}-\hat a_{p-1,q-1}}),
\end{equation}
    Endfor of $p,q.$

\item[3.] Set $\hat a_{pq}=\hat a_{qp}$ for all $0\le p<q<N.$
\end{itemize}
\vspace{2mm} \hrule  \vspace{5mm}

We describe below the details for computing the initial values. %remarks are in order.
\begin{itemize}
  	\item  We find from \eqref{defncmb} that
	\begin{align}
		\hat{a}_{p0} & =  \int_{-1}^1 L_p(\xi)\ln\frac{3 - \xi}{1 - \xi}\,\diff{\xi}\nonumber\\
&= \int_{-1}^1 L_p(\xi)\ln\frac{3 - \xi}{2}\,\diff{\xi}+ \int_{-1}^1 L_p(\xi)\ln \frac 2{1 - \xi}\,\diff{\xi}
		        := \alpha_p + \beta_p.\label{ap0form}
	\end{align}
It is clear that by \eqref{deriv} and integration by parts,
	\begin{align*}
		\alpha_p & = \int_{-1}^1 L_p(\xi)\ln\frac{3 - \xi}{2}\,\diff{\xi}=
 \frac 1 {2p+1}\Big( \int_{-1}^1 \frac{L_{p+1}(\xi)}{3 - \xi}\,\diff{\xi}-\int_{-1}^1 \frac{L_{p-1}(\xi)}{3 - \xi}\,\diff{\xi} \Big).
 \end{align*}
 It decays exponentially with respect to $p,$ and the use of a Legendre-Gauss quadrature leads to an exponentially accurate approximation, since the function $1/(3-\xi)$ is analytic within an ellipse (see \cite{XWZh.11}).
 We find from e.g.,  \cite{Gau.W10} that
 \begin{align*}
\beta_p=\int_{-1}^1 L_p(\xi)\ln \frac 2{1 - \xi}\,\diff{\xi}=\begin{cases}
2,\quad &{\rm if}\;\; p=0,\\
\dfrac 2{p(p+1)},\quad &{\rm if}\;\; p\ge 1.
\end{cases}
 \end{align*}
 	\item  Using  \eqref{ajikk}, \eqref{3term} and the orthogonality of Legendre polynomials, we find
	\begin{align}
		\hat{a}_{p1} & = \iint_\mcq \frac{\eta L_p(\xi)}{2-\xi-\eta}\,\diff{\xi}\diff{\eta} = \iint_\mcq \frac{(2 - \xi) L_p(\xi)}{2-\xi-\eta}\,\diff{\xi}\diff{\eta} - \iint_\mcq \frac{(2 - \xi - \eta)L_p(\xi)}{2-\xi-\eta}\diff{\xi}\diff{\eta}\nonumber\\
		& = 2\iint_\mcq \frac{L_p(\xi)}{2-\xi-\eta}\,\diff{\xi}\diff{\eta} - \iint_\mcq \frac{\xi L_p(\xi)}{2-\xi-\eta}\,\diff{\xi}\diff{\eta} - \int_{-1}^{1}\left[\int_{-1}^1 L_p(\xi)\,\diff{\xi}\right]\diff{\eta}\nonumber\\
		& = 2\hat{a}_{p0} - \frac{(p + 1)\hat{a}_{p + 1, 0} + p\hat{a}_{p - 1, 0}}{2p + 1},\quad p\ge 1. \label{ap1form}
	\end{align}
\item We see that with an accurate computation of the initial values  $\{\hat a_{p0}\}$, marching by \eqref{ap1form} and \eqref{itmeqn} is expected to be stable.  In Figure \ref{stencils}, we provide  a schematic illustration of sweeping
    the stencils by the algorithm.
    \begin{figure}[th!]
\begin{minipage}[t]{0.7\textwidth}
\centering
		\includegraphics[width=1\textwidth]{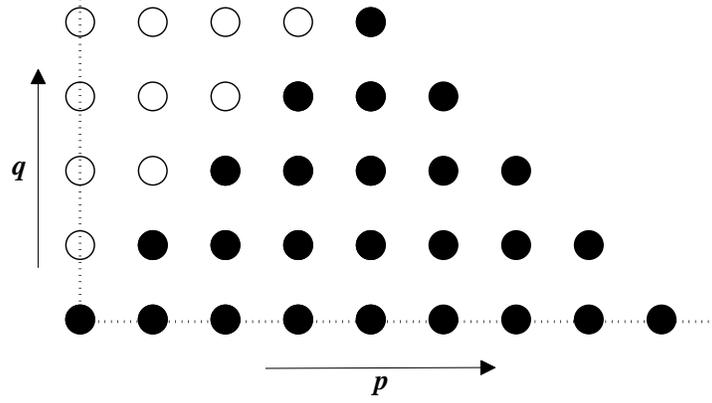}
\end{minipage}
	\caption{\small Diagram for computing $\{\hat a_{pq}\}_{p,q=0}^N$ with $N=2,$ where the stencils marked by ``$\bullet$" are marched via Steps 1-2 in the {\bf Algorithm}, and those marked by ``$\circ$"  are obtained by the symmetric property in Step 3.}
	\label{stencils}
\end{figure}
\end{itemize}

\begin{rem}\label{logrimap} We see that the rectangle-triangle mapping \eqref{best_map} essentially induces
logarithmic singularity. Indeed,  numerical quadrature of integrands involving a logarithmic weight function
is of independent interest (see, e.g., \cite{Gau.W10}).  \qed
\end{rem}

\begin{rem}\label{massm}
 As a quick note,  the mass matrix under this basis is sparse. Indeed, by \eqref{massmatrix},
\begin{align}\label{uvtsparse}
	(u, v)_{\mct} =  \frac{1}{8}\iint_\mcq \tilde{u}\tilde{v}\, \diff{\xi}\diff{\eta} - \frac{1}{16}\iint_\mcq \xi\tilde{u}\tilde{v}\,\diff{\xi}\diff{\eta} - \frac{1}{16}\iint_\mcq \eta\tilde{u}\tilde{v}\,\diff{\xi}\diff{\eta},
\end{align}
so we claim this from \eqref{3term} and the orthogonality of the Legendre polynomials. \qed
\end{rem}

\begin{rem}\label{genmaprem}  With an additional affine mapping, any triangular element $\mct_{\rm any}$ can be transformed to the reference square $\mcq.$   It is  important to point out that the stiffness and mass matrices on
$\mct_{\rm any}$ can be precomputed in a similar fashion as above. To justify this, we consider
 a general triangle $\mct_{\rm any}$ with vertices  $V_i = (x_i, y_i)$, $i = 1, 2, 3$.
Like \eqref{best_map},  we have the mapping from $\mcq$ to $\mct_{\rm any}:$
 \begin{equation}\label{xygenmap}
	(x, y) = (x_1, y_1)\frac{(1 - \xi)(1 - \eta)}{4} + (x_2, y_2)\frac{(1 + \xi)(3 - \eta)}{8} + (x_3, y_3)\frac{(3 - \xi)(1 + \eta)}{8},
%	(x, y) = P_1\frac{(1 - \xi)(1 - \eta)}{4} + P_2\frac{(1 + \xi)(3 - \eta)}{8} + P_3\frac{(3 - \xi)(1 + \eta)}{8}
	%, \quad \forall\, (\xi, \eta) \in {\mcq}.
\end{equation}
for all $(\xi, \eta) \in {\mcq}.$
A direct calculation leads to %gives %gives a mass matrix (cf. \eqref{uvtsparse})
\begin{align}\label{geninner}
	\inprod[\mct_{\rm any}]{u}{v} =  \frac{F}{8}\iint_\mcq \tilde{u}\tilde{v}\, \diff{\xi}\diff{\eta} - \frac{F}{16}\iint_\mcq \xi\tilde{u}\tilde{v}\,\diff{\xi}\diff{\eta} - \frac{F}{16}\iint_\mcq \eta\tilde{u}\tilde{v}\,\diff{\xi}\diff{\eta},
\end{align}
and
\begin{align}
	\inprod[\mct_{\rm any}]{\gradient{u}}{\gradient{v}} &= A\iint_\mcq\big(\widetilde{\nabla}\cdot\tilde{u}\big)\big(\widetilde{\nabla}\cdot\tilde{v}\big)\chi^{-1}\diff{\xi}\diff{\eta} + C\iint_\mcq\big(\widetilde {\nabla}^\intercal\tilde{u}\big)\big(\widetilde {\nabla}^\intercal\tilde{v}\big)\chi^{-1}\diff{\xi}\diff{\eta}\nonumber\\& -
B\iint_\mcq\big[\big(\widetilde{\nabla}\cdot\tilde{u}\big)(\widetilde {\nabla}^\intercal\tilde{v}\big) + \big(\widetilde {\nabla}^\intercal\tilde{u}\big)\big(\widetilde{\nabla}\cdot\tilde{v}\big)\big]\chi^{-1}\diff{\xi}\diff{\eta}, \label{geninnerg}
	 %\frac{1}{4}\iint_\mcq\big(\widehat{\nabla}^\intercal\cdot\tilde{u}\big)\big(\widehat{\nabla}^\intercal\cdot\tilde{v}\big)\chi^{-1}\diff{\xi}\diff{\eta}\\
\end{align}
where $\chi^{-1}=2/(2-\xi-\eta),$ the differential operators are defined in \eqref{hndnt}, and the constants are given by
\begin{align*}
       F & = (x_2 - x_1)(y_3 - y_1) - (x_3 - x_1)(y_2 - y_1)\not=0,\\
	A & = ((x_2 - x_3)^2 + (y_2 - y_3)^2) / (2F)% = |P_2 - P_3|^2 / 2F
	,\\
	B & = ((x_2 - x_1)^2 + (y_2 - y_1)^2 - (x_3 - x_1)^2 - (y_3 - y_1)^2) /(4F)% = (|P_2 - P_1|^2 - |P_3 - P_1|^2)/ 4F
	,\\
	C & = ((2x_1 - x_2 - x_3)^2 + (2y_1 - y_2 - y_3)^2) /(8F)% = |2P_1 - P_2 - P_3|^2 / 8F
	.
\end{align*}
In particular, if $\mct_{\rm any}=\mct,$  \eqref{geninner} and \eqref{geninnerg} (note: $B=0$)  reduce to
\eqref{massmatrix} and \eqref{grad_u_grad v}, respectively.

As with \eqref{evalumatrix}, we find from \eqref{Jacorec}-\eqref{convssum} that
 $\widetilde \nabla \cdot \Phi_{kl}$  and $\widetilde {\nabla}^\intercal \Phi_{kl}$ can be expressed in terms of
 $\{L_{k\pm i}(\xi)L_{l\pm j}(\eta) \}_{i,j=0,1},$ so the mass matrix on $\mct_{\rm any}$ can be precomputed by the same algorithm described above.   \qed
\end{rem}
%\comm{\bf Put the general transform!!!}

\subsection{Interpolation, quadrature and nodal basis}   Through the general mapping
\eqref{xygenmap}, the operations (e.g., interpolation, quadrature and numerical differentiations)  on  a triangular element can be performed on the reference square $\mcq.$

%In a quadrangle based spectral element method, it is advantageous  to use nodal basis functions, which are known  to be more convenient  to impose continuity across the elements and to deal with nonlinear problems.

Hereafter, let  $\{\zeta_j\}_{j=0}^N$ be the Legendre-Gauss-Lobatto (LGL) points, i.e., the zeros of $(1 - \zeta^2)L'_N(\zeta), $ and let   $\{h_j\}_{j=0}^N$ be the associated Lagrangian basis polynomials such that
$h_j\in P_N(I)$ and $h_j(\zeta_k) = \delta_{kj},$ where $\delta_{kj}$ is the Kronecker delta.
Given $v\in C(\bar I),$ the one-dimensional polynomial interpolant  of $u$ is
\begin{equation}\label{onedimeninterp}
(I_N^\zeta v)(\zeta)=\sum_{j=0}^N v(\zeta_j) h_j(\zeta)\in P_N,\quad \forall\, \zeta\in \bar I.
\end{equation}
Recall that the LGL quadrature has the exactness: % degree of precision $2N-1:$
\begin{equation}\label{oneexamc}
\int_{-1}^1 \phi(\zeta) d\zeta=\sum_{j=0}^N \phi(\zeta_j)\omega_j,\quad \forall\, \phi\in P_{2N-1}(I),
\end{equation}
where $\{\omega_j\}$ are the LGL quadrature weights.

Given any $u\in C(\bar \mct),$ we define the interpolant of $u$ by
\begin{equation}\label{inter2d}
(\II_N u)(x,y) = (I_N^\xi I_N^\eta \tilde u)\circ T^{-1}=\Big(\sum_{i,j=0}^n  (u\circ T)(\xi_i,\eta_j) h_i(\xi)h_j(\eta)\Big)\circ T^{-1},
\end{equation}
where  $T$ and $T^{-1}$ are defined in \eqref{best_map} and  \eqref{best_map_inv} as before, and $\{\xi_k=\eta_k=\zeta_k\}_{k=0}^N$.   Notice that  $\II_N u\in Y_N(\mct).$

We also extend the LGL quadrature to define the discrete inner product on $\mct$ as
\begin{equation}\label{disinner2d}
\langle u,v \rangle_{N,\mct}=\frac 1 8 \sum_{i,j=0}^N \tilde u(\xi_i,\eta_j) \tilde v(\xi_i,\eta_j)  \chi(\xi_i,\eta_j) \omega_i \omega_j,
\end{equation}
where $\chi=(2-\xi-\eta)/2.$  As a consequence of \eqref{massmatrix}, \eqref{oneexamc} and \eqref{Ynsps}-\eqref{Ynsps2}, there holds
\begin{equation}\label{disinner2dnew}
\langle u,v \rangle_{N,\mct}=(u,v)_{\mct},\quad \forall\, u\cdot v\in Y_{2N-2}(\mct),
\end{equation}
which also holds for all $u\cdot v\in {\mathcal P}_{2N-2}(\mct).$

Since $\{h_kh_l\}_{k,l=0}^N$ forms the nodal basis for  ${\mathcal Q}_N(\mcq),$ we can obtain the nodal basis for
$Y_N(\mct):$
\begin{equation}\label{nodebasist}
Y_N(\mct)={\rm span}\big\{\widehat \Psi_{kl} : \widehat \Psi_{kl}(x,y)=(h_kh_l)\circ T^{-1} \,:\,0\le k,l\le N  \big\}.
\end{equation}
In view of \eqref{uvtsparse}, the mass matrix under this nodal basis can be computed easily as usual by tensorial LGL quadrature.
However, the direct evaluation of the stiffness matrix like \eqref{stiffness} is prohibitive, as there is no recursive way for the computation. In order to surmount this obstacle,
we resort to the notion of ``discrete transform" (cf. \cite{ShenTaoWang2011}).  Like \eqref{evalumatrix}, we have
\begin{equation*}\label{evalumatrix2}
\begin{split}
& \chi \partial_x \widehat \Psi_{kl} =%(3-\xi) h_k'(\xi)h_l(\eta)+(1+\eta) h_k(\xi)h'_l(\eta)\\
2(h_k'(\xi)h_l(\eta)+h_k(\xi)h'_l(\eta)) + [(1-\xi) h_k'(\xi)h_l(\eta)-(1-\eta) h_k(\xi)h'_l(\eta)]\in {\mathcal Q}_N(\mcq),\\
& \chi \partial_y \widehat \Psi_{kl} =%(1+\xi) h_k'(\xi)h_l(\eta)+(3-\eta) h_k(\xi)h'_l(\eta)
2(h_k'(\xi)h_l(\eta)+h_k(\xi)h'_l(\eta)) - [(1-\xi) h_k'(\xi)h_l(\eta)-(1-\eta) h_k(\xi)h'_l(\eta)]\in {\mathcal Q}_N(\mcq).
\end{split}
\end{equation*}
The idea is to transform $\{\chi \partial_x \widehat \Psi_{kl}\}_{k,l=0}^N$ and $\{\chi \partial_y \widehat \Psi_{kl}\}_{k,l=0}^N$ to $\{L_i(\xi)L_j(\eta)\}_{i,j=0}^N$ via a two-dimensional  discrete transform. Then the evaluation boils down to finding $\{a_{ij}^{i'j'}\}$ in  \eqref{stiffness} as before.

\section{Estimates of orthogonal projection and interpolation errors}\label{sect:errorest}

The section is devoted to  error estimates of orthogonal projections and
 interpolations on triangles.  These results will be essential for understanding the approximability of the basis functions,
 and provide important tools for  error analysis of the TSEM for PDEs.

\subsection{Orthogonal projections}
We start with considering  the projection $\Pi_N: \Lspace[2]{\mct} \to Y_N(\mct),$ defined by
\begin{equation}\label{l2prj}
\big(\Pi_N u-u, v\big)_\mct = 0, \quad \forall\, v \in Y_N(\mct).
\end{equation}
\begin{thm}\label{ortL2err}
	For any  $u \in H^r(\mct)$ with $r\ge 0$, we have
	\begin{equation}\label{L2error}
		\|\Pi_N u-u\|_{\mct} \le cN^{-r}|u|_{r, \mct},
	\end{equation}
where $c$ is a positive constant independent of $N$ and $u.$
\end{thm}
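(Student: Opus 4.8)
The plan is to transfer the approximation problem from the triangle $\mct$ to the reference square $\mcq$ via the mapping $T$, where tensor-product Legendre approximation theory is available, and then pull the estimate back. The key structural fact is \eqref{Ynsps}: the image space $Y_N(\mct)$ is exactly $\mathcal{Q}_N(\mcq)\circ T^{-1}$, so projecting onto $Y_N(\mct)$ in $\Lspace[2]{\mct}$ corresponds, under the change of variables \eqref{massmatrix}, to a \emph{weighted} $L^2$-projection onto $\mathcal{Q}_N(\mcq)$ on the square, with the Jacobian weight $J=\chi/8$ (where $\chi=(2-\xi-\eta)/2$). Concretely, writing $\tilde u=u\circ T$, we have $\|\Pi_N u-u\|_{\mct}^2 = \tfrac18\iint_\mcq |\widetilde{\Pi_N u}-\tilde u|^2\,\chi\,\diff{\xi}\diff{\eta}$, so I would define the auxiliary weighted projection $\widetilde\Pi_N^\chi:\Lspacewt[2]{\mcq}{\chi}\to\mathcal{Q}_N(\mcq)$ and observe that $\Pi_N u = (\widetilde\Pi_N^\chi\tilde u)\circ T^{-1}$, reducing \eqref{L2error} to a weighted polynomial approximation estimate on $\mcq$.

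\textbf{The main steps} would then be: first, establish a one-dimensional weighted Legendre approximation result on $I=(-1,1)$ with the weight $\chi$ (which is affine and \emph{bounded above and below away from zero}, since $0<\chi<2$ on $\overline{\mcq}$ except that it vanishes only at the single corner $(1,1)$). Because $\chi$ degenerates only at one point and is otherwise comparable to the constant weight, I expect that the weighted norm $\|\cdot\|_{\chi,\mcq}$ is \emph{equivalent} to the unweighted $\|\cdot\|_{\mcq}$ up to the behaviour near that corner, and one can likely bound the $\chi$-weighted error by the unweighted tensorial Legendre projection error. Second, invoke the standard tensor-product Legendre $L^2$-projection estimate on $\mcq$, namely that the best $\mathcal{Q}_N$-approximation of $\tilde u$ satisfies $\|\widetilde\Pi_N\tilde u-\tilde u\|_{\mcq}\le cN^{-s}\|\tilde u\|_{H^s(\mcq)}$ when $\tilde u\in H^s(\mcq)$. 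Third, translate the Sobolev regularity: I must show that $u\in H^r(\mct)$ implies $\tilde u=u\circ T$ has the requisite regularity on $\mcq$, and crucially that $|\tilde u|_{H^r(\mcq)}$ (or a weighted analogue) is controlled by $|u|_{r,\mct}$ with the sharp power $N^{-r}$ surviving the transfer.

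\textbf{The main obstacle}, and the delicate point, is precisely this regularity transfer through $T$. The inverse map $T^{-1}$ in \eqref{best_map_inv} contains the square root $\chi$, which is only Hölder-continuous (not smooth) at the corner image $(1/2,1/2)$; consequently $\tilde u=u\circ T$ need \emph{not} inherit full integer $H^r$-regularity on $\mcq$ in the ordinary (unweighted) sense, because the chain rule introduces factors of $\chi^{-1}$ from \eqref{grad_u_v}. The honest way to handle this is to work throughout in the \emph{weighted} Sobolev spaces adapted to $\chi$ — as foreshadowed by $\widetilde H^1_\chi(\mcq)$ in \eqref{sp_sob_h1} and the norm equivalence in Proposition~\ref{equivnormlem} — and to prove a weighted Jacobi/Legendre projection estimate in which the weight $\chi^{\pm1}$ exactly absorbs these singular factors. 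I would therefore expect the clean statement to rest on a Babuška--Guo-type weighted approximation theorem, where the derivative seminorm $|u|_{r,\mct}$ on the triangle corresponds naturally to a $\chi$-weighted seminorm of $\tilde u$ on the square; verifying that correspondence (so that no regularity is lost at the degenerate corner and the exponent $N^{-r}$ is attained for all $r\ge0$, including by an interpolation argument between integer orders) is the crux of the proof, while the remaining passages are routine change-of-variables and standard spectral approximation bookkeeping.
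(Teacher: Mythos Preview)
Your plan is a genuine attempt, but it takes a much harder road than the paper does, and the obstacle you yourself identify is not actually resolved in your outline.

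The paper's proof is essentially two lines. Since $\Pi_N$ is the $L^2(\mct)$-orthogonal projection onto $Y_N(\mct)$, one has
\[
\|\Pi_N u-u\|_{\mct}=\inf_{\phi\in Y_N(\mct)}\|\phi-u\|_{\mct}\le \inf_{\psi\in \mathcal{P}_N(\mct)}\|\psi-u\|_{\mct},
\]
where the second inequality uses the key structural fact you overlooked: by Proposition~\ref{tranfm} (equivalently~\eqref{Ynsps2}), the ordinary polynomial space $\mathcal{P}_N(\mct)$ sits \emph{inside} $Y_N(\mct)$. One then simply quotes a known optimal $L^2$-polynomial approximation result on the triangle (the paper cites \cite[Theorem~3.3]{LS10}) to get $\|\pi_N u-u\|_{\mct}\le cN^{-r}|u|_{r,\mct}$, and the theorem follows. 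There is no need to pass to the square at all, and no regularity transfer through the singular map $T$ is required.

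By contrast, your route forces you to control $|\tilde u|_{H^r(\mcq)}$ (or a $\chi$-weighted version) by $|u|_{r,\mct}$. You correctly flag this as the crux, but you do not carry it out; and in fact it is genuinely delicate for $r\ge 2$, because repeated applications of the chain rule via \eqref{grad_u_v} pile up negative powers of $\chi$ that the Jacobian weight $J=\chi/8$ does not fully absorb. The paper avoids this entirely. So the gap in your proposal is not that your strategy is wrong in principle, but that the hard step you defer is precisely the one the paper's argument renders unnecessary, thanks to the polynomial inclusion $\mathcal{P}_N(\mct)\subset Y_N(\mct)$.
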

\begin{proof} We have
\begin{equation}\label{prooftmp1}
\|\Pi_N u-u\|_{\mct}\overset{(\ref{l2prj})}=\inf_{\phi\in Y_N(\mct)} \|\phi -u\|_{\mct}
\overset{(\ref{Ynsps2})}\le \|\psi-u\|_{\mct},\quad \forall \psi\in {\mathcal P}_N(\mct).
\end{equation}
Now, we take $\psi$ to be the best $L^2$-approximation in ${\mathcal P}_N(\mct),$ denoted by $\pi_N u.$
By   \cite[Theorem 3.3]{LS10},
\begin{align}\label{polyappl2}
  \|\pi_N u-u\|_{\mct} \le c N^{-r} \Big(\sum_{k_1+k_2+k_3=r}
 \big\| \partial_x^{k_1}\partial_y^{k_2}(\partial_y - \partial_x)^{k_3}u\big\|^2_{\omega^{k_1,k_2,k_3},\mct}\Big)^{1/2}
 \le c N^{-r} |u|_{r,\mct},
  \end{align}
  where $\omega^{k_1,k_2,k_3}=x^{k_1+k_3} y^{k_2+k_3}(1-x-y)^{k_1+k_2}$ is a Jacobi weight function on $\mct.$
Therefore, the estimate \eqref{L2error} follows from \eqref{prooftmp1} and  \eqref{polyappl2}.
\end{proof}

We now turn to the $H^1$-projection: $\Pi_N^1: H^1(\mct)\to Y_N(\mct)$ such that
\begin{equation}\label{YnnY}
\big(\nabla (\Pi_N^1 u-u), \nabla v\big)_{\mct}+ \big(\Pi_N^1 u-u, v\big)_\mct=0,\quad \forall\, v\in Y_N(\mct),
\end{equation}
and  the $H^1_0$-projection:  $\Pi_N^{1,0}: H^1_0(\mct)\to Y_N^0(\mct)=Y_N(\mct)\cap H^1_0(\mct),$  defined by
\begin{equation}\label{YnnY2}
\big(\nabla (\Pi_N^{1,0} u-u), \nabla v\big)_{\mct}=0,\quad \forall\, v\in Y_N^0(\mct),
\end{equation}
where $H^1_0(\mct)$ is defined as usual, i.e., the subspace of $H^1(\mct)$ with functions vanishing on the boundary
of $\mct.$

\begin{thm}\label{ortH1err}
	For any $u \in H^1_0(\mct)\cap H^r (\mct)$ with $r \geq 1$, we have
	\begin{equation}\label{H1P}
		\|\Pi_N^{1,0} u-u\|_{\mu,\mct} \le c N^{\mu - r}  |u|_{r,\mct}, \quad \mu = 0, 1,
	\end{equation}
	where $c$ is a positive constant independent of $u$ and $N.$
	It  also holds for any $u\in H^r(\mct)$ with $\Pi_N^{1} u$ in place of $\Pi_N^{1,0} u.$
\end{thm}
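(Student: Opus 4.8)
The plan is to reduce the estimate for the irrational-enriched space $Y_N(\mct)$ to a polynomial approximation estimate on $\Polsp{N}(\mct)$, using the decomposition $Y_N(\mct)=\Polsp{N}(\mct)\oplus\chi\Polsp{N-1}(\mct)$ from Proposition~\ref{tranfm}(ii), which guarantees $\Polsp{N}(\mct)\subset Y_N(\mct)$. The key structural fact is the standard minimization characterization of a Ritz/Galerkin projection: the $H^1_0$-projection $\Pi_N^{1,0}u$ is the best approximation to $u$ in the $H^1_0$-seminorm (equivalently the $H^1$-norm by Poincar\'e) over $Y_N^0(\mct)$, and likewise $\Pi_N^1 u$ is the best approximation in the full $H^1$-norm over $Y_N(\mct)$. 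Thus for the $\mu=1$ case I would write
\begin{equation*}
\seminorm[1,\mct]{\Pi_N^{1,0}u-u}=\inf_{v\in Y_N^0(\mct)}\seminorm[1,\mct]{v-u}\le\inf_{\psi\in\Polsp{N}^0(\mct)}\seminorm[1,\mct]{\psi-u},
\end{equation*}
and then invoke an $H^1_0$ polynomial approximation result on the triangle (e.g.\ the companion estimate in \cite{LS10} used already for Theorem~\ref{ortL2err}) to bound the right-hand side by $cN^{1-r}\seminorm[r,\mct]{u}$. The containment $\Polsp{N}^0(\mct)\subset Y_N^0(\mct)$ is what lets the infimum over the larger space $Y_N^0$ be dominated by the infimum over polynomials vanishing on $\boundary{\mct}$.

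For the $\mu=0$ estimate I would use the classical Aubin--Nitsche duality argument. Setting $e=\Pi_N^{1,0}u-u$, I introduce the auxiliary problem $-\Delta w=e$ in $\mct$ with $w\in H^1_0(\mct)$, note the elliptic regularity bound $\norm[2,\mct]{w}\le c\norm[\mct]{e}$ (for which I would need $\mct$ convex, which it is), and compute
\begin{equation*}
\norm[\mct]{e}^2=\inprod[\mct]{\gradient{e}}{\gradient{w}}=\inprod[\mct]{\gradient{e}}{\gradient{(w-\Pi_N^{1,0}w)}}\le\seminorm[1,\mct]{e}\,\seminorm[1,\mct]{w-\Pi_N^{1,0}w},
\end{equation*}
where the Galerkin orthogonality \eqref{YnnY2} removes the projection of $w$. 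Applying the $\mu=1$ estimate to $w$ with $r=2$ gives $\seminorm[1,\mct]{w-\Pi_N^{1,0}w}\le cN^{-1}\seminorm[2,\mct]{w}\le cN^{-1}\norm[\mct]{e}$, and combining with the already-established $\seminorm[1,\mct]{e}\le cN^{1-r}\seminorm[r,\mct]{u}$ yields $\norm[\mct]{e}\le cN^{-r}\seminorm[r,\mct]{u}$, which is the $\mu=0$ case.

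The parallel statement for $\Pi_N^1$ on all of $H^r(\mct)$ proceeds identically, replacing the seminorm by the full $H^1$-norm, using best approximation over $Y_N(\mct)\supset\Polsp{N}(\mct)$ together with the non-homogeneous polynomial estimate, and running the same duality step with the Neumann/full-$H^1$ auxiliary problem. The main obstacle I anticipate is not the projection machinery—that is textbook once the minimization characterization and Galerkin orthogonality are in place—but rather pinning down the precise polynomial approximation estimate on the triangle in the right (Jacobi-weighted, seminorm-to-seminorm) form and verifying it respects the homogeneous boundary condition; I would lean on \cite{LS10} exactly as Theorem~\ref{ortL2err} does, checking that the quoted result delivers the $N^{1-r}$ rate in the $H^1$-seminorm with the correct dependence on $\seminorm[r,\mct]{u}$. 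A secondary technical point is confirming $H^2$ elliptic regularity on the triangle with the constant independent of anything relevant, which follows from convexity of $\mct$.
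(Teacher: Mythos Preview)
Your proposal is correct and follows essentially the same route as the paper: best-approximation via the inclusion $\Polsp{N}^0(\mct)\subset Y_N^0(\mct)$ combined with the polynomial $H^1_0$-estimate of \cite[Theorem~3.4]{LS10} for $\mu=1$, and a standard Aubin--Nitsche duality (with $H^2$ regularity on the convex triangle) for $\mu=0$. The only cosmetic difference is that the paper phrases the duality via $\sup_{g\in L^2}$ of an auxiliary problem $a(u_g,v)=(g,v)$ rather than your direct $-\Delta w=e$ formulation, but these are equivalent.
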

\begin{proof}  Here, we only provide the proof for the projector $\Pi_N^{1,0},$ as the estimate for $\Pi_N^1$ can be obtained in a very similar fashion.

  By the Poincar%$\acute{e}$
\'{e} inequality, we know that the semi-norm  $| \cdot|_{1,\mct}$ is a norm of $H^1_0(\mct).$
  Hence, by the definition \eqref{YnnY2},
  \begin{equation}\label{mu1}
  	\|u - \Pi_{N}^{1,0} \|_{1,\mct} \le c |\phi-u|_{1,\mct}\le c \|\phi-u\|_{1,\mct},\quad \forall\, \phi\in Y_N(\mct).
  	\end{equation}
It is  known  from \eqref{Ynsps2} that ${\mathcal P}_N(\mct)\subset Y_N(\mct),$ so we can take $\phi$ to be the orthogonal
projection $\pi_N^{1,0}: H^1_0(\mct)\to {\mathcal P}^0_N(\mct)={\mathcal P}_N(\mct)\cap H^1_0(\mct),$ defined by
	\begin{equation}\label{defnP0}
	\big(\nabla (\pi_N^{1,0} u-u), \nabla v\big)_{\mct}=0,\quad \forall\, v\in {\mathcal P}_N^0(\mct).
	\end{equation}
We quote the estimate in 	 \cite[Theorem 3.4]{LS10}:
\begin{equation}\label{estmmLi}
\begin{split}
\|\pi_N^{1,0}u-u\|_{1,\mct} & \le c N^{1-r}  \Big(\sum_{k_1+k_2+k_3=r} \big\|\partial_x^{k_1}  \partial_y^{k_2} (\partial_y-\partial_x)^{k_3} u\big\|^2_{\omega^{k_1,k_2,k_3}_{+},\mct} \Big)^{1/2} \\
& \le c N^{1-r}|u|_{r,\mct},
\end{split}
\end{equation}
where $$\omega^{k_1,k_2,k_3}_{+}=x^{\max(k_1+k_3-1,0)}y^{\max(k_2+k_3-1,0)}(1-x-y)^{\max(k_1+k_2-1,0)}.$$
Hence, the estimate \eqref{H1P} with $\mu=1$ follows from \eqref{mu1} and \eqref{estmmLi}.

    To show  \eqref{H1P} 	with  $\mu = 0$,  we use a duality argument as in \cite{ciarlet1978finite}, which we sketch below.
    Given  $g \in \Lspace[2]{\mct},$  we consider the  auxiliary problem:
    Find  $u_g\in H^1_0(\mct)$ such that
  \begin{equation}\label{aux1}
  	a(u_g,v):= (\nabla u_g, \nabla v)_\mct = (g, v)_{\mct}, \quad \forall\, v \in  H^1_0(\mct).
	\end{equation}
 By a standard argument, we can show that this problem has a unique solution with the regularity
 $\|u_g\|_{2,\mct}\le c \|g\|_{\mct}.$
 	
	Now, taking $v =u - \Pi_N^{1,0} u$ into \eqref{aux1},  we find from  \eqref{YnnY2} and \eqref{H1P} with $\mu=1$ that
	\begin{align*}
		\big|(g, u-\Pi_N^{1,0} u)_\mct\big| & = \big| a(u_g,u - \Pi_{N}^{1,0} u)\big|=\big| a(u_g- \Pi_{N}^{1,0} u_g,u - \Pi_{N}^{1,0} u)\big|\\
		     & \leq \big |u_g - \Pi_{N}^{1,0} u_g\big|_{1,\mct} \big |u - \Pi_{N}^{1,0} u\big|_{1,\mct}  \\
		     &	\le c N^{-r} |u_g|_{2,\mct} |u|_{r,\mct}\le 	 c N^{-r} \|g\|_{\mct} |u|_{r,\mct}.
	\end{align*}
	Finally, we  derive
	\begin{equation*}
		\big\|u - \Pi_{N}^{1,0} u\big\|_{\mct} = \sup_{0 \neq g \in \Lspace[2]{\mct}}
		\frac{\big|(g, u-\Pi_N^{1,0} u)_\mct\big|}{\|g\|_{\mct}} \le c N^{-r} |u|_{r,\mct}.
	\end{equation*}
	This completes the proof.
\end{proof}

\begin{rem}\label{estrem1}  It is seen that benefited from the fact that  ${\mathcal P}_N(\mct)\subset Y_N(\mct),$ we are able to obtain the optimal error estimates directly from the available polynomial approximation results
on triangles.    \qed
\end{rem}

\subsection{Estimation of interpolation error}\label{intersec}

Now, we estimate the error of interpolation by \eqref{inter2d} on $\mct$.
The estimate of the  one-dimensional  LGL interpolation (cf. \eqref{onedimeninterp}) is useful for our analysis (see
\cite[Theorem 3.44]{ShenTaoWang2011}), that is,  for any $v\in H^r(I)$ with $r\ge 1,$  we have
\begin{equation}\label{onederror}
\big\|I_N^\zeta v-v\big\|_{L^2(I)}\le cN^{-r} \big\|(1-\zeta^2)^{(r-1)/2} v^{(r)}\big\|_{L^2(I)}.
\end{equation}
\begin{thm}\label{interr} For any $u \in H^r(\mct)$ with $r\ge 2$,
	\begin{align}\label{eqinterr}
\|\II_N u-u\|_{\mct}\le c N^{-r} B_r(u),
  \end{align}
  where
  \begin{equation}\label{Bru}
  B_r(u)=\begin{cases}
   |u|_{2, \mct} + \|(\partial_y - \partial_x)^2u\|_{J^{-1}, \mct} +\|\gradient{\cdot u}\|_{J^{-1}, \mct},\quad & {\rm if}\;\; r=2,\\
   |u|_{r,\mct} + |u|_{r-1,\mct},\quad & {\rm if}\;\; r\ge 3,
  \end{cases}
  \end{equation}
  $J$ is the Jacobian as defined in \eqref{Jacobiannew}, and  $c$ is a constant independent of $u$ and $N$.
\end{thm}
\begin{proof} To this end, let ${I_d}$ be the identity operator.  Using \eqref{massmatrix}, \eqref{inter2d} and  \eqref{onederror},
we obtain
	\begin{align*}
		\big\|\II_N u - u\big\|_{\mct}& \le c \big\|I_N^\xi I_N^\eta \tilde u - \tilde  u\big\|_{\mcq}
		= c\big\|(I_N^\xi - I_d)(I_N^\eta - I_d)\tilde u + (I_N^\xi - I_d)\tilde u + (I_N^\eta - I_d)\tilde{u}\big\|_{\mcq} \\
		                                  & \leq c \big(\big\|(I_N^\xi - I_d)(I_N^\eta - I_d)\tilde u\big\|_{\mcq} +
\big\|(I_N^\xi - I_d)\tilde u \big\|_{\mcq} + \big\|(I_N^\eta - I_d)\tilde u\big\|_{\mcq}\big)\\
                     & \le c N^{-1}\big\| (I_N^\eta - I_d) \partial_{\xi}\tilde{u} \big\|_{\mcq}
                      +c\big( \big\|(I_N^\xi - I_d)\tilde u \big\|_{\mcq} + \big\|(I_N^\eta - I_d)\tilde u\big\|_{\mcq}\big)\\
		          &\le  c N^{-r}\big(\big\|(1-\eta^2)^{(r-2)/2}\partial_{\xi} \partial_{\eta}^{r- 1}  \tilde{u}\big\|_{\mcq}
+\big\|(1-\xi^2)^{(r-1)/2}\partial_{\xi}^r\tilde{u}\big\|_{\mcq}\\
&\qquad  +
\big\|(1-\eta^2)^{(r-1)/2}\partial_{\eta}^r\tilde{u}\big\|_{\mcq}\big).
	\end{align*}
{It remains to transform the variables $(\xi,\eta)$ back to $(x,y)$ and obtain tight upper bounds of the right-hand side using  norms of $u$ on $\mct$.
%Herein, assume $u \in \Sspace[\varrho]{\mct}$, $\tilde{u}(\xi, \eta) = u(\T_{1 / 2}(\xi, \eta)) = u(x, y)$.  For brevity, the notation of \eqref{triL2seminorm} and \eqref{triH10seminorm} will be used.  Also, let %$\varpi^{\alpha, \beta}(\zeta) := (1 - \zeta)^\alpha(1 + \zeta)^\beta$, the Jacobi weight on the variable $\zeta$, sometimes denoted as $\varpi^{\alpha, \beta}_\zeta$.
%$\varpi^{\alpha, \beta}_\zeta := \varpi^{\alpha, \beta}(\zeta)$.
By \eqref{xy_xieta_inv},
\begin{align}
	\partial_\xi\tilde{u} & =
  \frac{1 - \eta}{4}\partial_y u - \frac{3 - \eta}{8}(\partial_y - \partial_x)u=	\frac{1 - \eta}{4}\partial_x u - \frac{1 + \eta}{8}(\partial_y - \partial_x)u,\label{xy_xieta_inv1}\\
	\partial_\eta\tilde{u} & = %\frac{-(1 + \xi)\partial_x u + (3 - \xi)\partial_y u}{8} =
	\frac{1 - \xi}{4}\partial_x u + \frac{3 - \xi}{8}(\partial_y - \partial_x)u = \frac{1 - \xi}{4}\partial_y u + \frac{1 + \xi}{8}(\partial_y - \partial_x)u.\label{xy_xieta_inv2}
\end{align}
Thus, we have
	\begin{equation}\label{xirhoderiv}
		\partial^r_\xi\tilde{u} = \sum\limits_{k = 0}^r (-1)^k\binom{r}{k}\left(\frac{1 + \eta}{8}\right)^k\left(\frac{1 - \eta}{4}\right)^{r - k}\partial^{r - k}_x(\partial_y - \partial_x)^k u,
	\end{equation}
and
\begin{align*}
		\big\|(1-\xi^2)^{(r-1)/2}&\partial_{\xi}^r\tilde{u}\big\|_{\mcq}^2= \iint_\mcq |\partial_{\xi}^r\tilde{u}|^2(1-\xi^2)^{r-1}\diff{\xi}\diff{\eta}\\%\frac{\diff{x}\diff{y}}{J}\\
%		& = \iint_\mct \left|\sum\limits_{k = 0}^r (-1)^k\binom{r}{k}\left(\frac{1 + \eta}{8}\right)^k\left(\frac{1 - \eta}{4}\right)^{r - k}\partial^{r - k}_x(\partial_x - \partial_y)^k u\right|^2\frac{(1 - \xi^2)^{r - 1}}{J}\diff{x}\diff{y}\\
		& \leq c \sum\limits_{k = 0}^r \iint_\mct\big|\partial^{r - k}_x(\partial_y - \partial_x)^k u\big|^2
 \frac{Q(\xi,\eta;r,k)}{J}\,\diff{x}\diff{y},
	\end{align*}
where
\begin{align*}
Q(\xi,\eta;r,k)=\Big(\frac{1 + \eta}{8}\Big)^{2k}\Big(\frac{1 - \eta}{4}\Big)^{2r - 2k} (1-\xi^2)^{r-1}.
\end{align*}
One verifies readily from  \eqref{best_map} that
\begin{align}
 & \frac  1 4 (1 - \xi)(1 - \eta) = 1 - x - y,\label{rela1}\\
&		\frac 1 4 {(1 + \xi)(1 - \eta)}+ \frac  1  8 {(1 + \xi)(1 + \eta)} = x,\label{rela2} \\
&	\frac 1 4 {(1 - \xi)(1 + \eta)}+ \frac 1  8 {(1 + \xi)(1 + \eta)}= y.\label{wtbd}
\end{align}
Therefore, by \eqref{rela1}-\eqref{wtbd},  we derive that for $2\le r\le k-1,$
\begin{align*}
Q(\xi,\eta;r,k)&= \frac 1 {2^k}\Big[(1+\xi)^k\Big(\frac {1+\eta} 8   \Big)^k \Big]\Big[(1-\xi)^k\Big(\frac {1+\eta} 4   \Big)^k \Big] \\
&\quad\times \Big(\frac {(1+\xi)(1-\eta)} 4   \Big)^{r-k-1} \Big(\frac {(1-\xi)(1-\eta)} 4   \Big)^{r-k-1}  \frac {(1-\eta)^2} {16} \\
&\le c x^k y^k x^{r-k-1} (1-x-y)^{r-k-1} J^2	\le c \varpi^{r-1,k,r-k-1}J,
%	\frac{1 - \eta}{16}&\frac{(1 + \eta)^k(1 - \eta)^{r - k - 1}(1 + \xi)^{r - 1}}{8^k4^{r - k - 1}}\frac{(1 + \eta)^k(1 - \xi)^k}{4^k}\frac{(1 - \eta)^{r - k - 1}(1 - \xi)^{r - k - 1}}{4^{r - k - 1}}\\&= \frac{2^k}{4}\left(\frac{1 + \eta}{8}\right)^{2k}\left(\frac{1 - \eta}{4}\right)^{2r - 2k - 1}(1 - \xi^2)^{r - 1} \leq \omega^{r-k-1,0,k}J;
	\end{align*}
where we  used the fact:  $1 - \eta \leq 2 - \xi - \eta = 16 J, $ and denoted by $\varpi^{\alpha,\beta,\gamma}=x^\alpha y^\beta (1-x-y)^\gamma.$ Similarly,  for $2\le r=k,$
\begin{align*}
	Q(\xi,\eta;r,k)&=\frac 1 {2^r}\Big(\frac {(1+\xi)(1+\eta)} 8 \Big)^{r-1} \Big(\frac {(1-\xi)(1+\eta)} 4 \Big)^{r-2}
	\Big(\frac {(1+\eta)} 4 \Big)^{2} (1-\xi)
	\\& \le   c  x^{r-1} y^{r-2} J\le c \varpi^{r-1,r-2,0}J,
%	\frac{(1 + \eta)^{r - 2}(1 + \xi)^{r - 2}}{8^{r - 2}}\frac{(1 + \eta)^{r - 2}(1 - \xi)^{r - 2}}{4^{r - 2}}\frac{1 - \xi}{16} = \frac{2^r}{64}\left(\frac{1 + \eta}{8}\right)^{2r - 4}(1 - \xi^2)^{r - 2}(1 - \xi)\\& \leq \omega^{0,0,r-2}J;
	\end{align*}
where we  used  $1 - \xi \leq 2 - \xi - \eta = 16 J. $ Consequently,  we obtain for $r\ge 2,$
\begin{equation}\label{xirnorm}
\begin{split}
		&\big\|(1-\xi^2)^{(r-1)/2}\partial_{\xi}^r\tilde{u}\big\|_{\mcq} \\
		&\quad \leq c\Big(\sum_{k = 0}^{r - 1}\big\|\partial_{x}^{r-k}(\partial_y - \partial_x)^k u\big\|_{\varpi^{r-1,k,r-k-1}, \mct}^2
		 + \big\|(\partial_y - \partial_x)^r u\big\|_{\varpi^{r-1,r-2,0}, \mct}^2\Big)^{\frac{1}{2}}\\
		 &\quad \le  c \big(|u|_{r-1,\mct}+|u|_{r,\mct}\big).
	\end{split}
	\end{equation}
By swapping $x \leftrightarrow y$ and $\xi \leftrightarrow \eta$, we get that for $r\ge 2,$
	\begin{equation}\label{etarnorm}
		\big\|(1-\eta^2)^{(r-1)/2}\partial_{\eta}^r\tilde{u}\big\|_{\mcq} \leq  c \big(|u|_{r-1,\mct}+|u|_{r,\mct}\big).
	\end{equation}

We now turn to deal with the term  $\big\|(1-\xi^2)^{(r-2)/2}\partial_\eta\partial_{\xi}^{r-1}\tilde{u}\big\|_{\mcq}$. By
\eqref{xy_xieta_inv2} and  \eqref{xirhoderiv},
	\begin{align}
		%\psub[\varrho - 1]{\psub{\tilde{u}}{\eta}}{\xi} & =
		&\partial_\eta\partial^{r - 1}_\xi\tilde{u}
		 = \partial_\eta\left[\sum\limits_{k = 0}^{r - 1} (-1)^k\binom{r - 1}{k}\left(\frac{1 + \eta}{8}\right)^k\left(\frac{1 - \eta}{4}\right)^{r - k - 1}\partial^{r - k - 1}_x(\partial_y - \partial_x)^k u\right]\nonumber\\
		&\quad = \sum_{k = 0}^r W_1(\xi, \eta;r,k)\partial^{r - k}_x(\partial_y - \partial_x)^k u +
		 \sum_{k = 0}^{r - 1} W_2(\xi, \eta;r,k)\partial^{r - k - 1}_x(\partial_y - \partial_x)^k u, \label{estwes}
	\end{align}
		where $W_1$ and $W_2$ are polynomials of $\xi$ and $\eta.$
%	
%	
%	\begin{align*}
%		f_0(\xi, \eta) & = (1 - \eta)^{r - 1}(1 - \xi) / 4^r,\qquad	f_r(\xi, \eta) = (-1)^{r - 1}(1 + \eta)^{r - 1}(3 - \xi) / 8^r, \text{ else }\\
%		f_k(\xi, \eta) & = \frac{(-1)^k(r - 1)}{16}\binom{r - 2}{k - 1}\left(\frac{1 + \eta}{8}\right)^{k - 1}\left(\frac{1 - \eta}{4}\right)^{r - k - 1}\left(\frac{(3 - \xi)(1 - \eta)}{2(r - k)} - \frac{1 - \xi}{k}\right),\\
%		g_0(\xi, \eta) & = (1 - r)(1 - \eta)^{r - 2} / 4^{r - 1},\qquad g_{r - 1}(\xi, \eta) = (-1)^{r - 1}(r - 1)(1 + \eta)^{r - 2} / 8^{r - 1}, \text{ else }\\
%		g_k(\xi, \eta) & = \frac{(-1)^k(r - 1)}{16}\binom{r - 1}{k}\left(\frac{1 + \eta}{8}\right)^{k - 1}\left(\frac{1 - \eta}{4}\right)^{r - k - 2}\left(k - \frac{(r - 1)(1 + \eta)}{2}\right).
%	\end{align*}
	Thus,  we have %similar to the earlier approach,
	\begin{align*}
		\big\|(1-\xi^2)^{(r-2)/2}\partial_\eta\partial_{\xi}^{r-1}\tilde{u}\big\|_{\mcq}^2 & \leq  c \sum_{k = 0}^r \iint_\mct \left|\partial^{r - k}_x(\partial_y - \partial_x)^k u\right|^2 \frac{%W_1(\xi, \eta;r,k)^2
		(1 - \xi^2)^{r - 2}}{J}\,\diff{x}\diff{y}\\ & + c \sum_{k = 0}^{r - 1} \iint_\mct \left|\partial^{r - k - 1}_x(\partial_y - \partial_x)^k u\right|^2 \frac{%W_2(\xi, \eta;r,k)^2
		(1 - \xi^2)^{r - 2}}{J}\,\diff{x}\diff{y}.
	\end{align*}
This implies that for $r\ge 3,$
		\begin{equation}\label{etarnormw}
		\big\|(1-\xi^2)^{(r-2)/2}\partial_\eta\partial_{\xi}^{r-1}\tilde{u}\big\|_{\mcq} \leq  c \big(|u|_{r-1,\mct}+|u|_{r,\mct}\big).
	\end{equation}
For $r=2,$ we obtain from a direct calculation that
 \begin{equation}\label{newqn}
 \begin{split}
 \|\partial_\xi\partial_\eta\tilde{u}\|_\mcq &\leq |u|_{2, \mct} +\frac 1 {256} \|(\partial_y - \partial_x)^2u\|_{J^{-1}, \mct} +
 \frac 1 {64} \|\gradient{\cdot u}\|_{J^{-1}, \mct}.
 \end{split}
 \end{equation}
 A combination of \eqref{xirnorm}-\eqref{etarnorm} and \eqref{etarnormw}-\eqref{newqn} leads to the desired result.
}\end{proof}	

\begin{rem}\label{estmrem} Like \eqref{xirnorm}, we could obtain sharper estimates with semi-norms in the upper bound of
\eqref{eqinterr} featured with the Jacobi-type weight functions $\varpi^{\alpha,\beta,\gamma}.$

Notice that for $r=2,$ the semi-norms are weighted with $J^{-1},$  as we can not factor out $1-\xi$ or $1-\eta$ from $W_1$ and $W_2$  in \eqref{estwes} to eliminate $J^{-1}$.  However, we point out that the value of $\int\hspace*{-5pt}\int_{\mct} J^{-1}\diff{x}\diff{y} $ is finite. \qed
\end{rem}

\section{Numerical results and concluding  remarks}

In this section, we just provide some numerical results to demonstrate the high accuracy
   of the proposed algorithm for model elliptic problems on $\mct.$  We also intend to compare it with the standard tensor-product spectral approximations on rectangles to assess the performance of our approach.

Consider the  elliptic equation:
\begin{equation}\label{numtest2}
\begin{split}
& -\Delta u + \gamma  u = f,\quad  \text{ in } \mct\,,\quad  u|_{\Gamma_1}= 0, \quad \p{u}{\nu}\Big|_{\Gamma_2} = g,
\end{split}
\end{equation}
where the constant $\gamma\ge 0,$  $\Gamma_1$ is the edges $x=0$ and $y=0$,
$\Gamma_2$ is the hypotenuse of $\mct,$    and ${\nu}$ is the unit vector outer normal to $\Gamma_2.$

\subsection{The scheme and its convergence}
A weak formulation of \eqref{numtest2} is to find $u\in H^1_{\Gamma_1}(\mct):=\big\{u\in H^1(\mct) : u|_{\Gamma_1}=0\big\}$ such that
\begin{equation}\label{numtest2w}
\begin{split}
& {\mathcal B}(u,v):=(\gradient{u}, \nabla v)_{\mct}+ \gamma (u, v)_{\mct} = (f,v)_{\mct}+\gamma
\langle g, v\rangle_{\Gamma_2},\quad \forall\, v\in H^1_{\Gamma_1}(\mct),
\end{split}
\end{equation}
where $\langle \cdot,\cdot \rangle_{\Gamma_2}$ is the inner product of $L^2(\Gamma_2).$
It follows from  a  standard argument that if $f\in L^2(\mct)$ and $g\in L^2(\Gamma_2),$ the problem \eqref{numtest2w} admits a unique solution in $H^1_{\Gamma_1}(\mct)$.
%\begin{align}
%\label{wellposed}
%    \|u\|_{1} \lesssim \|f\| +  \|g\|_{ \Gamma_2}.
%\end{align}

The spectral-Galerkin approximation  of \eqref{numtest2w}
is to find $u_N \in Y_N^{\Gamma_1}(\mct):=Y_N(\mct)\cap H^1_{\Gamma_1}(\mct)$ such that for any
$v_N\in Y_N^{\Gamma_1}(\mct),$
\begin{equation}\label{Gaform}
{\mathcal B}_N(u_N,v_N):=(\nabla u_N, \nabla
v_N)_{\mct} +\gamma (u_N,v_N)_{\mct}
=(\II_N f, v_N)_{\mct}+\langle g,v_N \rangle_{N,\Gamma_2},
\end{equation}
where $\II_N $ is the interpolation operator as defined in  \eqref{disinner2d}, and the discrete inner product
$\langle g,v_N \rangle_{N,\Gamma_2}$ can be defined on  the quadrature rule:  %\comm{\bf Mike: write the quadrature rule in two parts!}
\begin{equation}\label{boundterm}
\int_{\Gamma_2} g\, \diff{\gamma} = \frac{\sqrt{2}}{2}\Big[\int_{-1}^1\tilde{g}(\xi, 1)\diff{\xi}- \int_{-1}^{1}\tilde{g}(1, \eta)\diff{\eta}\Big] \sim  \frac 1{\sqrt{2}}\Big[\sum\limits_{j=0}^N \big(\tilde{g}(\zeta_j, 1) - \tilde{g}(1 , \zeta_j)\big)\omega_j\Big],
\end{equation}
where $\{\zeta_j,\omega_j\}$ are the LGL interpolation nodes and weights as  before. More precisely, we define
\begin{equation}\label{discinner}
\langle g,v_N \rangle_{N,\Gamma_2}= \frac {1}{\sqrt 2}
\sum_{j=0}^N  \tilde{g}(\zeta_j, 1)\tilde v_N(\zeta_j, 1)\omega_j  - \frac {1}{\sqrt 2}
\sum_{j=0}^N  \tilde{g}(1 , \zeta_j)\tilde v_N(1,\zeta_j)\omega_j,
\end{equation}
where $\tilde g=g\circ T$ and $\tilde v_N=v_N\circ T$.

\begin{rem}\label{ellipanal}
Here, we purposely impose the Neumann boundary condition on the hypotenuse of $\mct,$ so that
the basis functions associated with this ``singular" edge are involved in the computation.

We reiterate that a distinctive difference with the scheme in \cite[Eqn. (25)]{LWLM} lies in that
the consistency  condition \eqref{ptsingular} is not needed to be built in the approximation space,
which significantly facilitates the implementation.  Note that the approaches based on the Duffy's transform
also need to modify the basis functions to meet the corresponding consistency condition (see, e.g., \cite{SWL,SXL11}). \qed
\end{rem}

To analyze the convergence of  \eqref{Gaform}, it is essential to study the approximability of the orthogonal projection:
$\Pi_N^{1,\Gamma_1} : H^1_{\Gamma_1}(\mct)\to Y_N^{\Gamma_1}(\mct),$  such that
$$
\big(\nabla(\Pi_N^{1,\Gamma_1}u-u), \nabla \phi)_{\mct}=0,\quad \forall \phi \in  Y_N^{\Gamma_1}(\mct).
$$
Following the lines %as in
of the proof of Theorem \ref{ortH1err}, we find that  \eqref{H1P} holds with
$\Pi_N^{1,\Gamma_1}$ and $H^1_{\Gamma_1}(\mct)$ in place of  $\Pi_N^{1,0}$ and $H^1_{0}(\mct)$, respectively.

Another ingredient for the analysis  is to estimate the error  between the
continuous and discrete inner products on $\Gamma_2$.  Using  \cite[Lemma 4.8]{ShenTaoWang2011} leads to
\begin{align*}
\big|\langle g,v_N \rangle_{N,\Gamma_2}-\langle g,v_N \rangle_{\Gamma_2}\big|\le & c N^{-t}\Big(
\big\|(1-\xi^2)^{(t-1)/2} \partial_\xi^t \tilde g(\cdot, 1)\big\|_{L^2(I)}\|\tilde v_N(\cdot,1)\|_{L^2(I)}\\
&+  \big\|(1-\eta^2)^{(t-1)/2} \partial_\eta^t \tilde g(1,\cdot)\big\|_{L^2(I)} \|\tilde v_N(1,\cdot)\|_{L^2(I)}\Big).
\end{align*}
Then we obtain from \eqref{xy_xieta_inv1}-\eqref{xy_xieta_inv2} and a  derivation similar to the proof of
Theorem \ref{interr} the following estimate:
\begin{equation}\label{traceest}
\begin{split}
\big|\langle g,v_N \rangle_{N,\Gamma_2}-\langle g,v_N \rangle_{\Gamma_2}\big| &\le c N^{-t}\|(xy)^{(t-1)/2}(\partial_y - \partial_x)^tg\|_{\Gamma_2}\|v_N\|_{\Gamma_2}\\
&\le c N^{-t} |g|_{t, \Gamma_2}\|v_N\|_{\Gamma_2},\quad t\ge 1.
\end{split}
\end{equation}

With the above preparations, we can prove the convergence of the scheme \eqref{Gaform} by using
 Theorems \ref{ortH1err}-\ref{interr},  the estimate \eqref{traceest}  and a standard  argument for error estimate of spectral approximation of elliptic problems.
 \begin{thm}\label{th:convapp} Let $u$ and $u_N$ be the solutions of \eqref{numtest2w} and \eqref{Gaform}, respectively.  If
$u \in H^1_{\Gamma_1}
 (\mct ) \cap  H^{r}(\mct )$, $f \in H^s(\mct )$ and $g\in H^t(\Gamma_2)$  with $r\ge 1, s\ge 2$
and $t\ge 1$, then we have
\begin{align*}
\|u-u_N\|_{\mu, \mct} \le c \big(N^{\mu-r} |u|_{r, \mct} +  N^{-s}B_s(f)+ N^{-t} |g|_{t,\Gamma_2}\big),
\end{align*}
where $\mu=0,1,$ $B_s(f)$ is  defined in \eqref{Bru}, and $c$ is a positive constant independent of $N$ and
$u,f,g.$
\end{thm}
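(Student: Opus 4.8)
The plan is to regard \eqref{Gaform} as a conforming Galerkin method whose bilinear form coincides with that of \eqref{numtest2w} on $Y_N^{\Gamma_1}(\mct)\subset H^1_{\Gamma_1}(\mct)$, so that the only perturbations are the replacement of $f$ by $\II_N f$ in the load term and of $\langle g,\cdot\rangle_{\Gamma_2}$ by the quadrature $\langle g,\cdot\rangle_{N,\Gamma_2}$ on the boundary; I would then run a Strang-type argument for $\mu=1$ and an Aubin--Nitsche duality argument for $\mu=0$. First I would record the well-posedness ingredients: $\mathcal{B}$ is continuous on $H^1_{\Gamma_1}(\mct)$, and since functions in $H^1_{\Gamma_1}(\mct)$ vanish on $\Gamma_1$, the Poincar\'e inequality makes $|\cdot|_{1,\mct}$ a norm there, whence $\mathcal{B}(v,v)\ge|v|_{1,\mct}^2\ge\alpha\|v\|_{1,\mct}^2$ for every $\gamma\ge0$. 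This coercivity, with Lax--Milgram, also secures the unique solvability of \eqref{Gaform}.

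For the $H^1$ bound, I would fix an arbitrary $v_N\in Y_N^{\Gamma_1}(\mct)$, set $w_N:=u_N-v_N$, and subtract the two formulations. Because the stiffness and mass inner products in $\mathcal{B}_N$ are exact, this yields
\[
\mathcal{B}(w_N,w_N)=\mathcal{B}(u-v_N,w_N)+(\II_N f-f,w_N)_{\mct}+\big(\langle g,w_N\rangle_{N,\Gamma_2}-\langle g,w_N\rangle_{\Gamma_2}\big).
\]
Applying coercivity on the left, the continuity of $\mathcal{B}$ to the first term, the Cauchy--Schwarz inequality to the second, and the trace inequality $\|w_N\|_{\Gamma_2}\le c\|w_N\|_{1,\mct}$ together with \eqref{traceest} to the third, and then cancelling one power of $\|w_N\|_{1,\mct}$, I obtain
\[
\|u_N-v_N\|_{1,\mct}\le c\big(\|u-v_N\|_{1,\mct}+\|\II_N f-f\|_{\mct}+N^{-t}|g|_{t,\Gamma_2}\big).
\]
Choosing $v_N=\Pi_N^{1,\Gamma_1}u$, bounding $\|u-\Pi_N^{1,\Gamma_1}u\|_{1,\mct}\le cN^{1-r}|u|_{r,\mct}$ by the analogue of \thmref{ortH1err} recorded just above the theorem, estimating $\|\II_N f-f\|_{\mct}\le cN^{-s}B_s(f)$ by \thmref{interr} (which requires $s\ge2$), and invoking the triangle inequality gives the claimed bound for $\mu=1$.

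For the $L^2$ bound I would use duality. Writing $e:=u-u_N$, I would solve the adjoint problem of finding $\psi\in H^1_{\Gamma_1}(\mct)$ with $\mathcal{B}(v,\psi)=(e,v)_{\mct}$ for all $v\in H^1_{\Gamma_1}(\mct)$, which by elliptic regularity satisfies $\|\psi\|_{2,\mct}\le c\|e\|_{\mct}$. Taking $v=e$ gives $\|e\|_{\mct}^2=\mathcal{B}(e,\psi)$, and inserting $\psi_N:=\Pi_N^{1,\Gamma_1}\psi$ and using the discrete equation for $u_N$ yields
\[
\|e\|_{\mct}^2=\mathcal{B}(e,\psi-\psi_N)+(f-\II_N f,\psi_N)_{\mct}+\big(\langle g,\psi_N\rangle_{\Gamma_2}-\langle g,\psi_N\rangle_{N,\Gamma_2}\big).
\]
I would bound the first term by $c\|e\|_{1,\mct}\,\|\psi-\psi_N\|_{1,\mct}\le cN^{-1}\|e\|_{1,\mct}\|e\|_{\mct}$, and the remaining two by $cN^{-s}B_s(f)\|e\|_{\mct}$ and $cN^{-t}|g|_{t,\Gamma_2}\|e\|_{\mct}$, using $\|\psi_N\|_{\mct}\le c\|e\|_{\mct}$, $\|\psi_N\|_{\Gamma_2}\le c\|\psi\|_{1,\mct}\le c\|e\|_{\mct}$ and \eqref{traceest}. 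Cancelling one factor of $\|e\|_{\mct}$ and inserting the already-established $\mu=1$ bound for $\|e\|_{1,\mct}$ into the first term shows that the $|u|_{r,\mct}$ contribution gains the extra power $N^{-1}$ (becoming $N^{-r}|u|_{r,\mct}$), while the data terms retain their orders $N^{-s}B_s(f)$ and $N^{-t}|g|_{t,\Gamma_2}$; this is exactly the $\mu=0$ bound.

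The Galerkin bookkeeping is routine; the delicate steps are the two consistency terms. The boundary-quadrature error must be routed through the trace inequality so that \eqref{traceest} controls it by $\|w_N\|_{1,\mct}$ (resp.\ $\|\psi_N\|_{1,\mct}$), and the data-interpolation term relies on $f\in H^s(\mct)$ with $s\ge2$ so that \thmref{interr} applies and contributes $B_s(f)$. The point requiring care is to track precisely which contributions do and do not pick up the extra factor $N^{-1}$ in the duality step, so as to land on the stated rates rather than losing or gaining a power of $N$.
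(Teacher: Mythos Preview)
Your proposal is correct and is precisely the ``standard argument for error estimate of spectral approximation of elliptic problems'' that the paper invokes (without spelling out) immediately before the theorem: a Strang-type estimate in $H^1$ using the projection bound for $\Pi_N^{1,\Gamma_1}$, Theorem~\ref{interr} for the $f$-interpolation term, and \eqref{traceest} for the boundary quadrature, followed by an Aubin--Nitsche duality step for the $L^2$ estimate. Your tracking of which terms do and do not gain the extra factor $N^{-1}$ in the duality step matches the stated rates exactly.
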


\subsection{Numerical results}

We first intend to show the typical spectral accuracy of the proposed method, so we particularly test it on
\eqref{numtest2} (with  $\gamma=1$) with the exact solution:
\begin{equation}\label{ex21}
	u(x, y) = e^{x + y - 1}\sin\big(3xy\big( y - {\sqrt{3}x}/{2} + {\sqrt{3}}/{4}\big)\big),\quad \forall\, (x,y)\in \mct.
\end{equation}
For comparison, we  also consider the standard tensor polynomial approximation of \eqref{numtest2} on a square $S=(0,1/\sqrt 2)^2$ (note: it has the same area as $\mct$) under a similar setting, i.e., Neumann data on two edges $x=1/\sqrt 2$ and $y=1/\sqrt 2,$ and homogeneous Dirichet data on the other two edges. We take the  exact solution:
\begin{equation}\label{ex21as}
	u(x, y) = {\rm exp}\Big(-\Big(\frac{1}{\sqrt{2}} - x\Big)\Big(\frac{1}{\sqrt{2}} - y\Big)\Big)\sin\big(3xy\big(y - {\sqrt{3}x}/{2} + {\sqrt{3}}/{4}\big)\big),\quad \forall (x,y)\in S.
\end{equation}

%\vskip 30pt
%{\bf Put a row of two figures here! Left: $L^2-$ and $L^\infty-$errors of modal basis! Right: Left: $L^2-$ and $L^\infty-$errors of nodal basis (for both triangle and rectangle)!}
\begin{figure}[!ht]
	\begin{minipage}{0.48\textwidth}
		\includegraphics[width=1\textwidth]{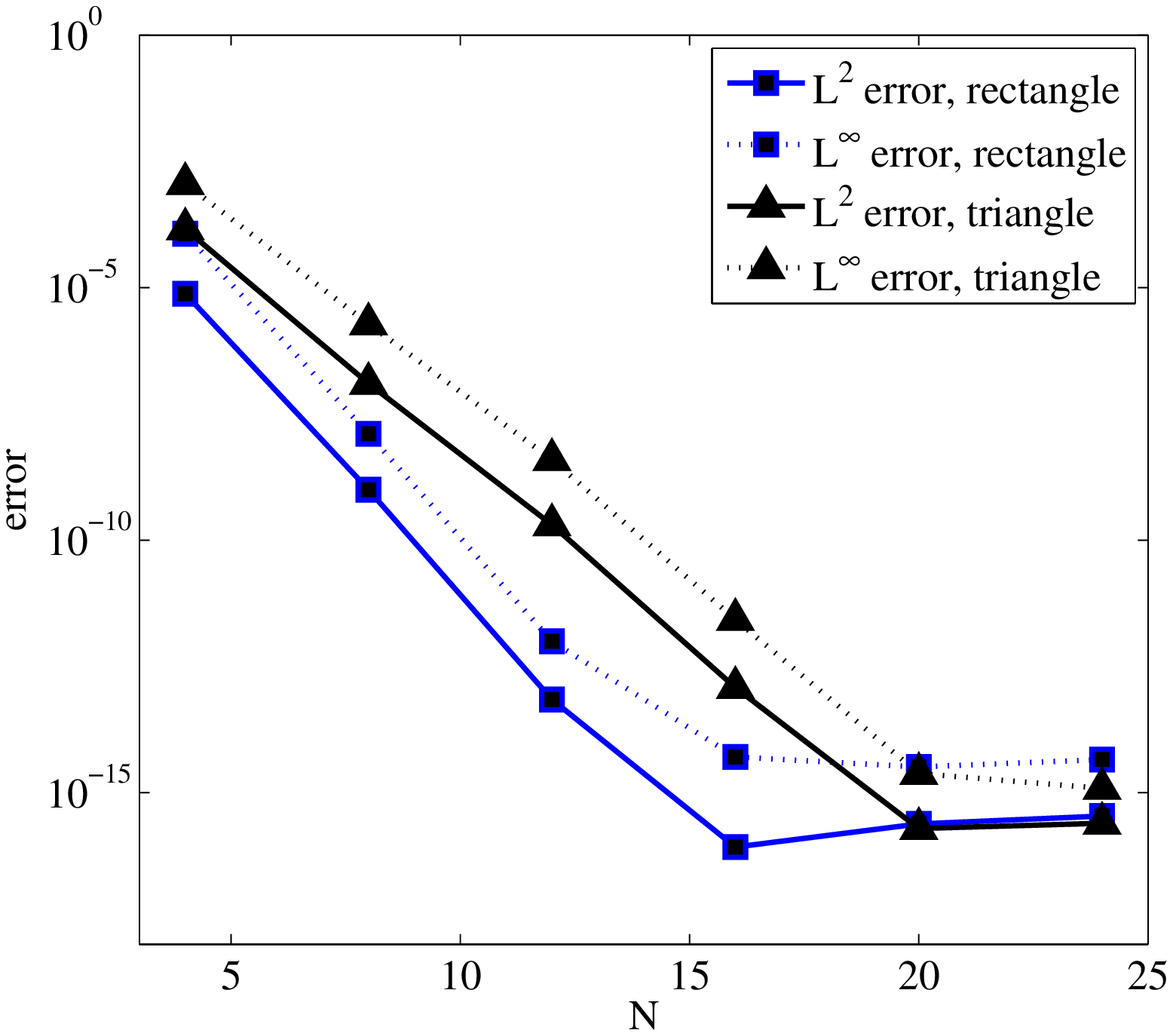}
	\end{minipage}\ %
	\begin{minipage}{0.48\textwidth}
		\includegraphics[width=1\textwidth]{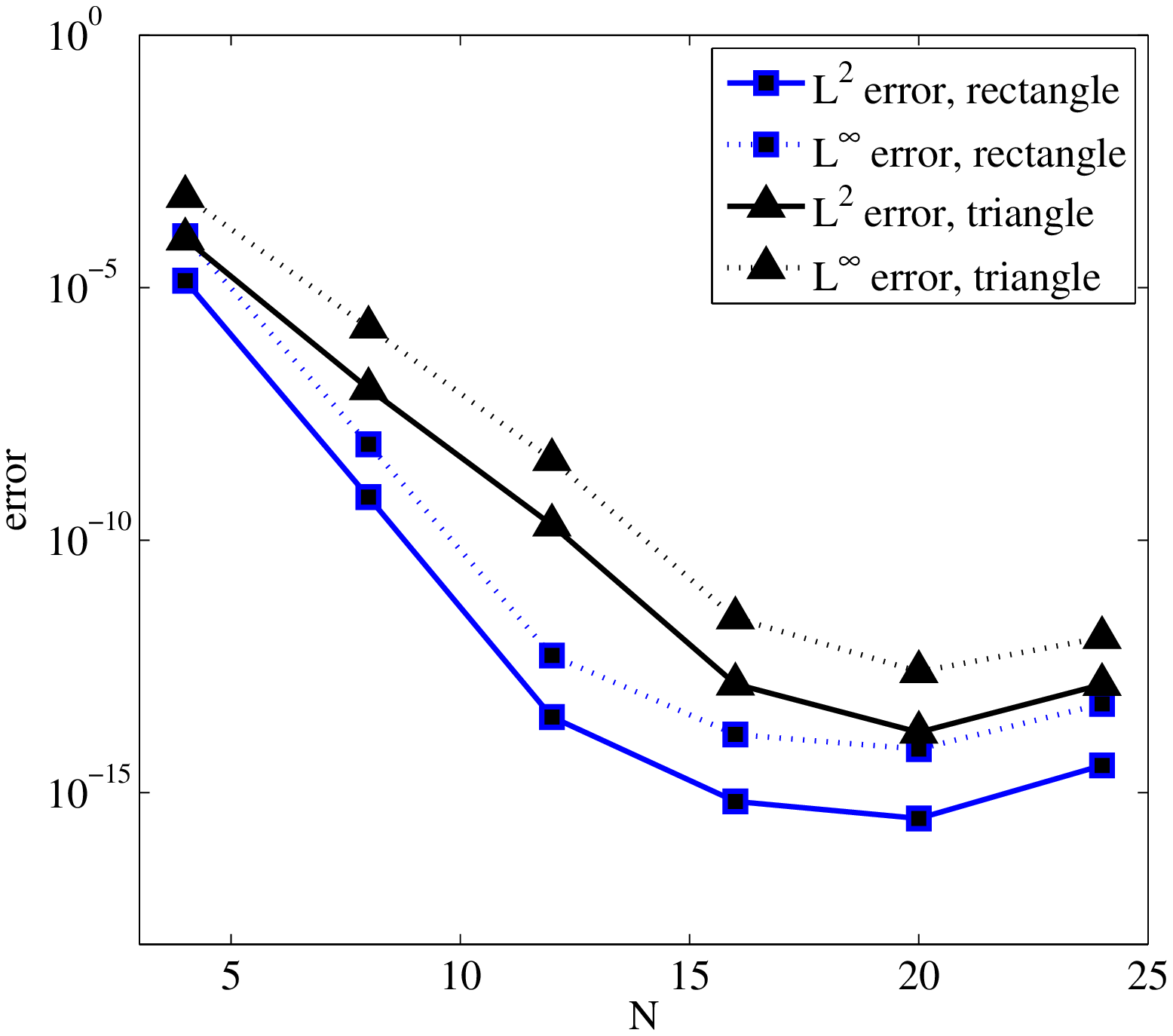}
	\end{minipage}\ %
	\caption{\small Numerical errors of (\ref{Gaform}) vs. tensorial polynomial approximation on the square $S$. Left: $L^2$- and $L^\infty$-errors using modal basis. Right: $L^2$- and $L^\infty$-errors using nodal basis.}
	\label{ex1figs}
\end{figure}

In  Figure \ref{ex1figs}, we plot the numerical errors of two methods, from which we observe that they share a very similar
convergence behavior and the errors decay like $O(e^{-cN}).$   For a fixed $N,$ the accuracy of approximation on $S$ seems to be slightly better than expected. We refer to  \cite[Figure 2.17]{CHQZ06b} for a similar comparison of the polynomial approximations on triangles \cite{Dubiner} and rectangles.     Indeed, the accuracy is comparable to the existing means in \cite{Kar.S05,SWL,LWLM}.

%the irrational spectral method on triangles shares %{\color{red}exhibit}
%the same asymptotic convergence and ``spectral'' accuracy as the polynomial spectral method on rectangles for smooth problems.

In the second test,  we choose the exact solution of \eqref{numtest2} with  finite regularity:
\begin{equation}\label{ex22}
	u(x, y) = (1 - x - y)^\frac{5}{2}(e^{xy} - 1),\quad \forall\,(x,y)\in \mct,
\end{equation}
which belongs to  $H^{3-\epsilon}(\mct)$ (for small $\epsilon>0$).
The counterpart on the square $S$ takes the form:
\begin{equation}\label{ex22f}
	u(x, y) = \Big(\frac 1 {\sqrt 2} - x\Big)^\frac{5}{2} \Big(\frac 1 {\sqrt 2} - y\Big)^\frac{5}{2}(e^{xy} - 1),\quad \forall\, (x,y)\in S.
\end{equation}

\begin{figure}[!ht]
   \hfill%
	\begin{minipage}{0.48\textwidth}
		\includegraphics[width=1\textwidth]{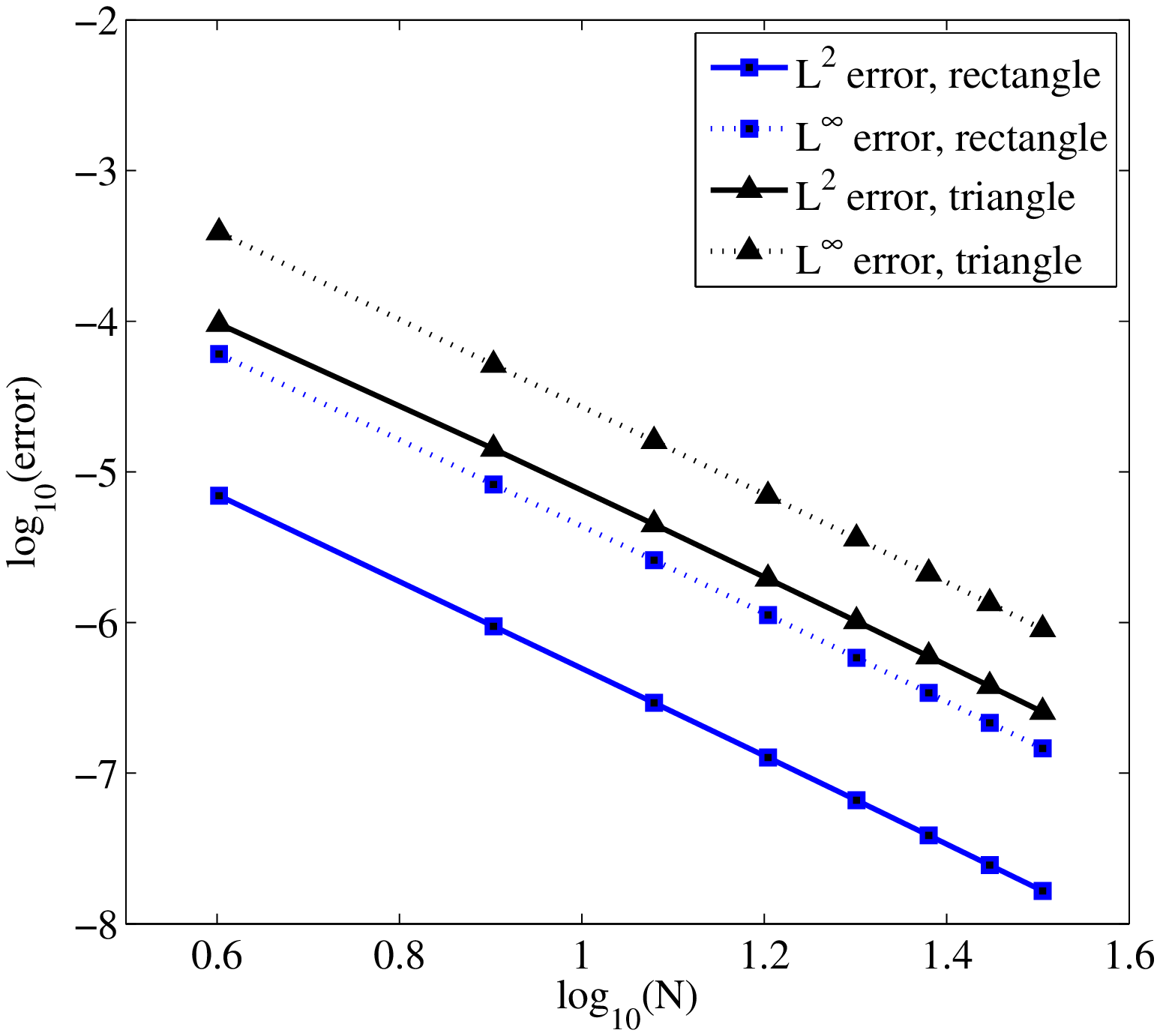}
	\end{minipage}   \hfill%
	\begin{minipage}{0.48\textwidth}
		\includegraphics[width=1\textwidth]{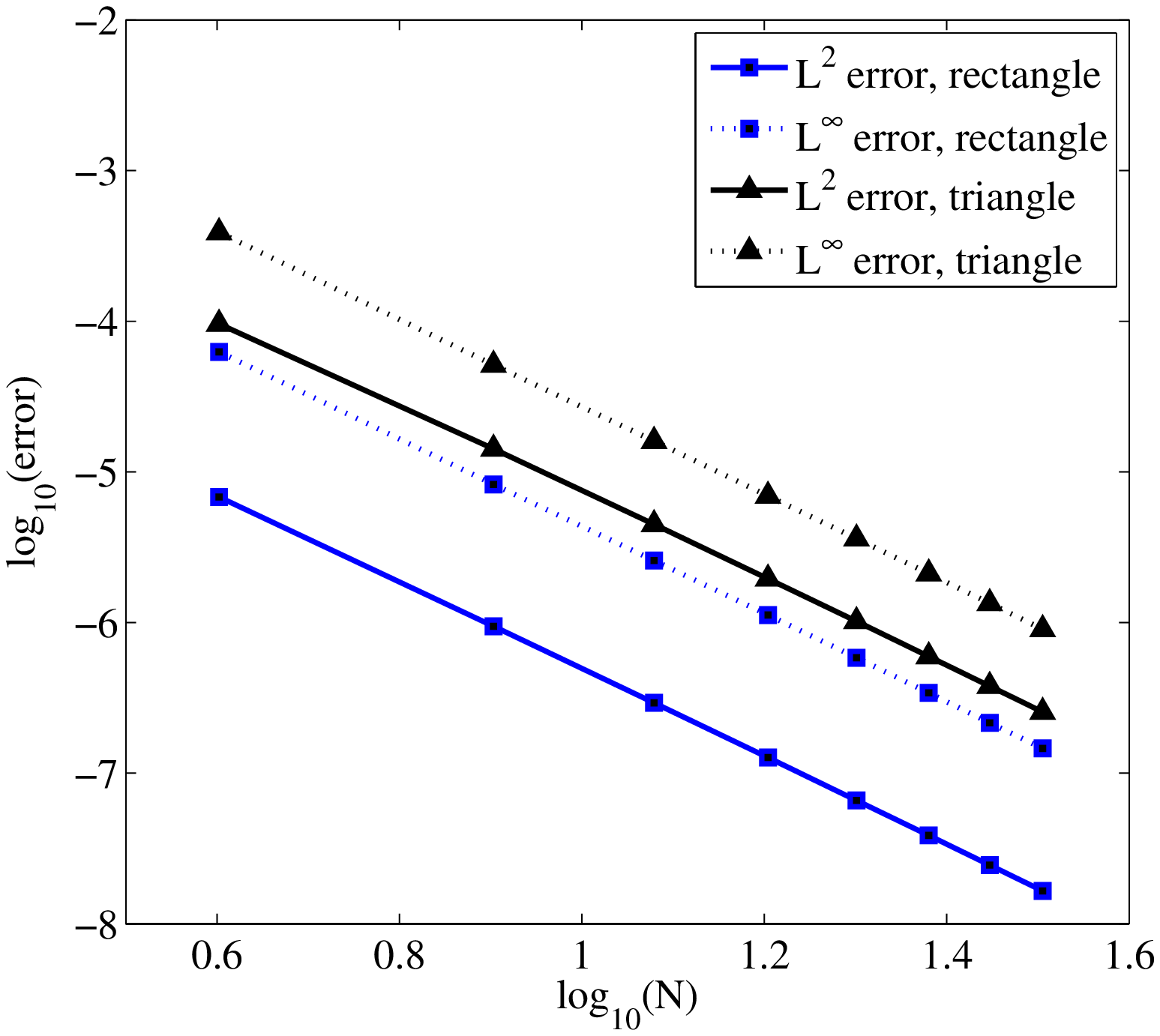}
	\end{minipage}\hspace*{\fill}
	\caption{\small \small Numerical errors of (\ref{Gaform}) vs. tensorial polynomial approximation on the square $S$ with solutions having finite regualrity.  Left: $L^2$- and $L^\infty$-errors using modal basis. Right: $L^2$- and $L^\infty$-errors using nodal basis.}
	\label{ex2figs}
\end{figure}
%\vskip 30pt

We depict in Figure \ref{ex2figs} the numerical errors of two approaches in $\log$-$\log$ scale, where the slopes of the lines  are all roughly $-3$ as predicted by the theoretical results (cf. Theorem \ref{th:convapp}).

%As %the problem
%{\color{red}with the results} in Test 1, the irrational spectral method on triangles shares %{\color{red}exhibit}
%the same asymptotic convergence as the polynomial spectral method on rectangles% for smooth problems
%.  Instead of an exponential convergence, a convergence rate of order roughly $3$ is observed in Fig.~\ref{ex2figs}, owing to the limited regularity of the exact solution.

{\small%\color{red}
\begin{table}[!ht]
		\caption{\small Comparison between the  approach in \cite{LWLM} and the new method}
		\vspace*{-7pt}
	\begin{center}
		\begin{tabular}{c*{4}{|c}}
			\hline
			\multirow{2}{*}{$N$} & \multicolumn{2}{c|}{Approach in \cite{LWLM}} & \multicolumn{2}{c}{Approach in this paper}\\
			\cline{2-5}
			& $\Lebsp{2}$-error & $\Lebsp{\infty}$-error & $\Lebsp{2}$-error & $\Lebsp{\infty}$ error\\% & LHS cond \#\\
			\hline
			15 & $2.866E\mathrm{-}06$ & $1.018E\mathrm{-}05$ & $2.349E\mathrm{-}06$ & $8.281E\mathrm{-}06$\\% & $1.250E\mathrm{+}04$\\%2.348942115649967e-006    8.280755233649289e-006    1.250470808050297e+004    4.828625359665802e-006
			\hline
			30 & $3.410E\mathrm{-}07$ & $1.203E\mathrm{-}06$ & $3.087E\mathrm{-}07$ & $1.091E\mathrm{-}06$\\% & $1.097E\mathrm{+}05$\\%3.086620921307916e-007    1.091052833585773e-006    1.096535496932601e+005    6.375531333154087e-007
			\hline
			45 & $9.940E\mathrm{-}08$ & $3.513E\mathrm{-}07$ & $9.299E\mathrm{-}08$ & $3.283E\mathrm{-}07$\\% & $3.979E\mathrm{+}05$\\%9.299484453340041e-008    3.283462084947973e-007    3.979081344186323e+005    1.922385457250491e-007 \\ 56: 4.857137510415174e-008    1.716431184432264e-007    7.978408825467364e+005    1.004301301543463e-007
			\hline
		\end{tabular}
			\label{ex22table}%moved to properly label the reference
	\end{center}
\end{table}
}

Finally, we compare our new approach with the method in \cite{LWLM} (where
the explicit consistency  condition  \eqref{ptsingular}  was built in the approximation space).
 One can see from Table \ref{ex22table} that both approaches enjoy a similar convergence behavior.  We reiterate that the new method does not require to modify the basis function, so with a pre-computation of the stiffness matrix, the triangular element can be treated as efficiently as the quadrilateral element.

\def \mct {\mathbf T}
\def \mcq {\mathbf Q}

\subsection{Concluding remarks} We initiated in this paper a new TSEM through presenting  the detailed implementation
and analysis on a triangle.  We demonstrated that the use of the rectangle-triangle mapping in
\cite{LWLM} led to much favorable grid distributions, when compared with the commonly-used Duffy's transform. More importantly, we showed the induced  singularity could be fully removed.  It is anticipated that with this initiative, we can develop an efficient TSEM on unstructured meshes built on a suitable discontinuous Galerkin formulation. This  will be discussed in a forthcoming work.

\backmatter

\bibliography{spectraltriangle}%,reftri}%{} %causes hyperref problem?
\bibliographystyle{plain}
\end{document}